\DeclareMathAlphabet{\mathpzc}{OT1}{pzc}{m}{it}
\title{On the modulo $p$ zeros of modular forms congruent to theta series}
\author{Berend Ringeling
	\thanks{This work is supported by NWO grant OCENW.KLEIN.006.}
	\\[1mm]
	\small Department of Mathematics, IMAPP, Radboud University,\\[-1mm] \small PO Box 9010, 6500~GL Nijmegen, Netherlands\\[0.5mm] \small \url{b.ringeling@math.ru.nl}
}
\newcommand{\sltwoz}{\mathrm{SL}_2(\z)}
\renewcommand{\phi}{\varphi}
\newcommand{\z}{\mathbb{Z}}
\newcommand{\q}{\mathbb{Q}}
\renewcommand{\c}{\mathbb{C}}
\theoremstyle{plain} 
\newtheorem{thm}{Theorem}[section]
\newtheorem{lem}[thm]{Lemma} 
\newtheorem{cor}[thm]{Corollary} 
\newtheorem{prop}[thm]{Proposition}
\theoremstyle{definition} 
\newtheorem{exmp}[thm]{Example}
\theoremstyle{remark}
\newcommand{\ii}{\mathrm{i}}
\renewcommand{\Im}{\mathrm{Im}}
\begin{document}
	\maketitle

\begin{abstract}
	For a prime  $p$ larger than $7$, the Eisenstein series of weight $p-1$ has some remarkable congruence properties modulo $p$. Those imply, for example, that the $j$-invariants of its zeros (which are known to be real algebraic numbers in the interval $[0,1728]$), are at most quadratic over the field with $p$ elements and are congruent modulo $p$ to the zeros of a certain truncated hypergeometric series.
	In this paper we introduce ``theta modular forms'' of weight $k \geq 4$ for the full modular group as the modular forms for which the first $\dim(M_k)$ Fourier coefficients are identical to certain theta series. We consider these theta modular forms for both the Jacobi theta series and the theta series of the hexagonal lattice. 	
	We show that the $j$-invariant of the zeros of the theta modular forms for the Jacobi theta series are modulo $p$ all in the ground field with $p$ elements. For the theta modular form of the hexagonal lattice we show that its zeros are at most quadratic over the ground field with $p$ elements.
	Furthermore, we show that these zeros in both cases are congruent to the zeros of certain truncated hypergeometric functions.
\end{abstract}
	
	\section{Introduction}
		For $k \in \z_{\geq 0}$ and a congruence subgroup $\Gamma \subset \sltwoz$, denote by $M_k(\Gamma)$ the $\q$-vector space of modular forms with rational $q$-expansion of weight $k$ for $\Gamma$.
		 If $\Gamma = \text{SL}_2(\mathbb{Z})$ we simply write $M_k$ for $M_k(\text{SL}_2(\mathbb{Z}))$. 
		 Basic examples of modular forms include the \emph{Eisenstein series of weight $k$}, given by the $q$-expansion
	\begin{equation*}
		\label{Eisensteins}
		E_k(\tau) = 1-\frac{2k}{B_k}\sum_{n = 1}^{\infty} \left(\sum_{d|n} d^{k-1} \right) q^n \in M_k \qquad (q = e^{2\pi \ii \tau})
	\end{equation*}
	for even $k \geq 4$, where $B_k$ is the $k$-th Bernoulli number and $\tau \in \mathbb{H} = \{ \tau \in \c \, | \, \Im \, \tau > 0\}$.
	Another example is the \emph{modular discriminant} \begin{equation*}
		\Delta(\tau) = q\prod_{n = 1}^{\infty}(1 - q^n)^{24}  = \frac{1}{1728}(E_4(\tau)^3 - E_6(\tau)^2) \in M_{12}.
	\end{equation*}
	It is well known that the space $M_k$ is finite-dimensional: writing $k \geq 4$ uniquely as
	\begin{equation}
		\label{indicesnot}
		k = 12n_k + 4a_k + 6b_k , \quad \text{where } n_k \in \mathbb{Z}_{\geq 0}, \, a_k \in \{0, 1,2 \}, \, b_k \in \{0,1\},
	\end{equation}
	an explicit basis for $M_k$ is given by
	\begin{equation}
		\label{basis}
		\{ \Delta^{n_k - \ell} E_4^{a_k + 3\ell}E_6^{b_k} \, | \, 0 \leq \ell \leq n_k \}.
	\end{equation}
Furthermore, define the \emph{modular $j$-invariant} as
\begin{equation*}
	j(\tau) = \frac{E_4(\tau)^3}{\Delta(\tau)};
\end{equation*}
it is a weakly holomorphic modular form (poles at the cusps are allowed) of weight $0$.
	For any $f \in M_k$, using the notation in \eqref{indicesnot}, consider the quotient
	\begin{equation*}
		Q[f] = \frac{f}{\Delta^{n_k}E_4^{a_k}E_6^{b_k}};
	\end{equation*}
this is a weakly holomorphic modular form of weight $0$.
	It follows from \eqref{basis} that there exists a polynomial $P[f](j) \in \q[j]$ of degree $\leq n_k$ such that  
	\begin{equation*}
		Q[f](\tau) = P[f](j(\tau)).
	\end{equation*}
	Explicitly, these polynomials can be written as
	\begin{equation*}
		P[f](j) = j^{\frac{\textnormal{ord}_{\rho}(f) - a_k}{3}}(j-1728)^{\frac{\textnormal{ord}_{\ii}(f) - b_k}{2}} \prod_{\substack{\tau \in \mathcal{F}, f(\tau) = 0 \\ j(\tau) \neq 0, 1728}} (j - j(\tau)),
	\end{equation*}
where $\mathcal{F} \subset \mathbb{H}$ denotes the standard fundamental domain and where $\ii$ and $\rho = e^{2 \pi \ii/3}$ are the elliptic points of $\mathcal{F}$.
Thus, the zeros of the polynomial $P[f]$ contain the zeros of $f$ as input; naturally this polynomial plays an important role in the study of zeros of modular forms.
	For example, one can consider the polynomial $P[f]$ for $f = E_k$.
	Since the orders at the elliptic points are given by $\textnormal{ord}_{\rho}(E_k) = a_k$ and $\textnormal{ord}_{\ii}(E_k) = b_k$, see \cite{Rankin}, it follows that
	\begin{equation*}
		P[E_k](j) = \prod_{\substack{\tau \in \mathbb{H}, \, E_k(\tau) = 0\\ j(\tau) \neq 0, 1728}} (j - j(\tau)).
	\end{equation*}
In 1970, it was shown by F.K.C. Rankin and H.P.F. Swinnerton-Dyer \cite{Rankin} that the non-elliptic zeros of $E_k(\tau)$ in $\mathcal{F}$ are all simple and located on the arc \begin{equation}
	\label{arc}
	A = \Bigl \{ e^{\ii \alpha}\ | \, \frac{\pi}{2} < \alpha < \frac{2 \pi}{3} \Bigr \}.\end{equation}Since the $j$-image of the arc is $j(A) = (0, 1728)$, it follows that the zeros of $P[E_k](j)$ are all simple and located in the real interval $(0,1728)$. Moreover, $P[E_k](j) \in \mathbb{Q}[j]$ implies that the $j$-invariants of the zeros are all algebraic numbers. Another feature of this polynomial is the location of the zeros over finite fields. For primes $p \geq 5$, the $q$-series coefficients of $E_{p-1}$ are all $p$-integral, so that the coefficients of the polynomials $P[E_{p-1}](j)$ are also all $p$-integral. Hence these polynomials can all be reduced modulo $p$. We have the following surprising result for them.
\begin{thm}[Deligne  {{\cite[Section 2.1]{Katz}}}]
	\label{A}
	For primes $p \geq 5$, we have the congruence
	\begin{equation}
		P[E_{p-1}](j) \equiv \prod (j - j(\mathcal{E})) \mod p,
	\end{equation}
	where the product ranges over all supersingular elliptic curves $\mathcal{E}/\overline{\mathbb{F}}_p$, up to isomorphism, with $j$-invariant $j(\mathcal{E}) \neq 0, 1728$.
\end{thm}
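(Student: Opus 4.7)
The plan is to identify the reduction of $E_{p-1}$ modulo $p$ with the \emph{Hasse invariant}, a mod-$p$ modular form of weight $p-1$ whose zero locus on the mod-$p$ modular curve $X(1)/\mathbb{F}_p$ is exactly the supersingular locus. Once this identification is made, the zeros of $P[E_{p-1}](j) \bmod p$ are forced to be the supersingular $j$-invariants different from $0$ and $1728$, and a short degree count completes the argument.

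The first step is to prove $E_{p-1} \equiv 1 \pmod p$ as a $q$-series. By the von Staudt--Clausen theorem the prime $p$ divides the denominator of $B_{p-1}$; in fact $pB_{p-1} \equiv -1 \pmod p$, so the coefficient $-2(p-1)/B_{p-1}$ has positive $p$-adic valuation. Consequently every Fourier coefficient at $q^n$ for $n \geq 1$ vanishes modulo $p$, while the constant term equals $1$, confirming that $E_{p-1}$ is $p$-integral with reduction $1$.

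Next I would regard $\overline{E_{p-1}}$ as a mod-$p$ modular form of weight $p-1$ and apply the $q$-expansion principle to identify it with the Hasse invariant $A$. Recall that $A$ is the section of $\omega^{\otimes (p-1)}$ on the $\mathbb{F}_p$-moduli stack of elliptic curves defined via the action of Frobenius on $H^1(\mathcal{E}, \mathcal{O}_{\mathcal{E}})$, with $q$-expansion $1$ at the Tate curve; the $q$-expansion principle then gives $\overline{E_{p-1}} = A$. The classical fact that $A$ vanishes with simple zero at exactly the supersingular elliptic curves on the stack now translates into the desired statement about $j$-invariants. Using Rankin's identities $\textnormal{ord}_\rho(E_{p-1}) = a_{p-1}$ and $\textnormal{ord}_i(E_{p-1}) = b_{p-1}$ to account for the elliptic-point contribution, one concludes that $P[E_{p-1}](j) \bmod p$ has roots precisely at the supersingular $j$-invariants away from $\{0,1728\}$, and a check via Eichler's mass formula that $\deg P[E_{p-1}] = n_{p-1}$ matches the count of such $j$-invariants completes the proof.

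The main obstacle is the identification $\overline{E_{p-1}} = A$: one needs the geometric definition of the Hasse invariant on the $\mathbb{F}_p$-moduli stack, the computation of its $q$-expansion on the Tate curve, and a clean statement of the $q$-expansion principle for mod-$p$ forms---all of which belong to Katz's algebraic theory of modular forms rather than the classical analytic theory. Once that framework is in place, the remainder of the argument is essentially formal.
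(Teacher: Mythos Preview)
The paper does not prove this statement at all: Theorem~1.1 is attributed to Deligne with a citation to Katz's lecture notes and is quoted in the introduction purely as background motivating the paper's own results (Theorems~1.5--1.9). There is therefore no ``paper's own proof'' to compare your proposal against.

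That said, your sketch is the standard argument and is correct in outline: the von Staudt--Clausen step gives $E_{p-1}\equiv 1\pmod p$, the $q$-expansion principle identifies this reduction with the Hasse invariant $A$ on the mod-$p$ moduli stack, and the geometric characterisation of $A$ as vanishing simply at supersingular points yields the factorisation. The only caveat is that your final sentence slightly overstates what is needed: once you know $\overline{E_{p-1}}=A$ has simple zeros exactly at the supersingular points, the factorisation of $P[E_{p-1}](j)\bmod p$ follows directly from the definition of $P[f]$, and no separate appeal to Eichler's mass formula is required to match degrees---the degree of $P[E_{p-1}]$ is $n_{p-1}$ by construction, and the simplicity of the zeros of $A$ already forces the right-hand product to have the same degree.
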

In fact, it was shown by M. Deuring \cite{Deuring}
 that $j(\mathcal{E})$ lies in $ \mathbb{F}_{p^2}$ for supersingular elliptic curves $\mathcal{E}$ over $\overline{\mathbb{F}}_p$. Therefore, we have the following theorem.
\begin{thm}[\cite{Deuring},\cite{Katz}]
	\label{B}
	For primes $p \geq 5$,
	$P[E_{p-1}](j)$ factors as a product of linear and quadratic factors over $\mathbb{F}_p$.
\end{thm}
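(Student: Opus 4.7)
\medskip

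\noindent\textbf{Proof plan.} The statement is essentially a formal consequence of the two ingredients assembled just before it, so the proof is short and the strategy is to bolt them together carefully.

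First I would note that $P[E_{p-1}](j) \in \mathbb{F}_p[j]$: this is a \emph{hypothesis} obtained from the $p$-integrality of the $q$-expansion of $E_{p-1}$ (valid for $p \ge 5$), which ensures $P[E_{p-1}](j) \in \mathbb{Z}_{(p)}[j]$ and hence can be reduced modulo $p$. By Theorem~\ref{A}, after this reduction $P[E_{p-1}](j)$ factors over $\overline{\mathbb{F}}_p$ as $\prod_{\mathcal{E}} (j - j(\mathcal{E}))$, where $\mathcal{E}$ ranges over supersingular elliptic curves over $\overline{\mathbb{F}}_p$ (up to isomorphism) with $j(\mathcal{E}) \neq 0, 1728$.

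Next I invoke Deuring's theorem, which guarantees that $j(\mathcal{E}) \in \mathbb{F}_{p^2}$ for every supersingular $\mathcal{E}/\overline{\mathbb{F}}_p$. Hence every root of $P[E_{p-1}](j)$ in $\overline{\mathbb{F}}_p$ lies in $\mathbb{F}_{p^2}$, so the minimal polynomial over $\mathbb{F}_p$ of any such root divides $X^{p^2} - X$ and therefore has degree dividing $2$, i.e.\ $1$ or $2$. Combining this with the fact that $P[E_{p-1}](j) \in \mathbb{F}_p[j]$, its decomposition into irreducible factors over $\mathbb{F}_p$ can only involve linear and quadratic factors, which is exactly the claim.

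The only ``obstacle'' is bookkeeping: I should check that the elliptic zeros (at $j=0$ and $j=1728$), which are excluded from the product in Theorem~\ref{A}, do not secretly reintroduce factors of degree $>2$. They do not: those $j$-values are already in $\mathbb{F}_p$, and by construction they are \emph{not} roots of $P[E_{p-1}]$ (the elliptic contributions have been absorbed into $E_4^{a_k} E_6^{b_k}$ in the definition of $P[f]$). Thus no additional factors need to be considered, and the proof is complete.
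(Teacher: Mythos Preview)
Your proposal is correct and follows essentially the same route as the paper: the paper derives Theorem~\ref{B} directly by combining Theorem~\ref{A} with Deuring's result that supersingular $j$-invariants lie in $\mathbb{F}_{p^2}$, exactly as you do. Your additional remarks on $p$-integrality and the elliptic zeros are sound bookkeeping that the paper leaves implicit.
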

The number of linear factors of this polynomial is related \cite{Brillhart} to the class number of the field $\mathbb{Q}(\sqrt{-p})$. More arithmetic properties for these polynomials can be found for example in \cite{Gekeler}. Natural extensions of these polynomials to rational function fields have been studied in \cite{Cornelissen}. 

The polynomials $P[E_{p-1}](j) \mod p$ can also be cast as truncated hypergeometric functions. We first introduce some notation.
For $x \in \mathbb{C}$ and $n \in \mathbb{Z}_{\leq 0}$, define the \emph{Pochhammer symbol} as $$(x)_n = \begin{cases} 1 &\textnormal{ if }n = 0, \\ x(x+1) \cdots (x + n - 1)&\textnormal{ if }n > 0.\end{cases}$$
For $\alpha, \beta \in \mathbb{C}$ and $\gamma \in \mathbb{C} \setminus \mathbb{Z}_{\leq 0}$ a \emph{hypergeometric function} is
	\begin{equation*}
		 {}_{2} F_{1} \left(\alpha, \beta; \gamma; z\right) := \sum_{n \geq 0} \frac{(\alpha)_n (\beta)_n}{(\gamma)_n} \frac{z^n}{n!},
	\end{equation*}
the series defines an absolutely convergent series in the disk $|z| < 1$. Finally, define the polynomials $U^{0}_n(j)$ and $U^{1}_n(j)$ for $n \geq 0$ as the unique polynomials of degree $n$ satisfying
	\begin{align}
		j^n \cdot {}_{2} F_{1}\left(\frac{1}{12},\frac{5}{12};1;\frac{1728}{j}\right) &= j^n \left(1 + \frac{60}{j} + \frac{39780}{j^2} + \cdots \right) = U_n^{0}(j) + \mathcal{O}(1/j)\label{kn1},\\
		j^n \cdot {}_{2} F_{1}\left(\frac{7}{12},\frac{11}{12};1;\frac{1728}{j}\right) &= j^n \left(1 + \frac{924}{j} + \frac{1211364}{j^2} + 
		\cdots \right) = U_n^{1}(j) + \mathcal{O}(1/j).\label{kn2}
	\end{align}
These truncations of hypergeometric functions satisfy the following congruences.
	\begin{thm}[{Kaneko-Zagier \cite[Proposition 5]{KZ}}]
		\label{C}
		If $k = p - 1$, we have the congruence
		$$P[E_k](j) \equiv U^{b_k}_{n_k}(j) \mod p.$$
	\end{thm}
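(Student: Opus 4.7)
My plan is to reduce the statement to a formal power-series manipulation via the classical hypergeometric parametrization of $E_4$ and $E_6$. Set $\lambda(\tau) := 1728/j(\tau)$, $(n,a,b) := (n_k,a_k,b_k)$ with $k = p-1$, and $F_b(z) := {}_{2}F_{1}\bigl(\frac{1+6b}{12},\frac{5+6b}{12};1;z\bigr)$ for $b \in \{0,1\}$, so that by construction $U_n^b(j)$ is the polynomial-in-$j$ part of $j^n F_b(\lambda)$. The key classical $q$-series identities---reflecting that $F_0(\lambda)$ is a period of the universal elliptic curve written in the variable $\lambda$---are
\[ E_4(\tau) = F_0(\lambda)^4 \qquad\text{and}\qquad E_6(\tau) = F_0(\lambda)^7/F_1(\lambda); \]
the second follows from the first using $E_6^2/E_4^3 = 1-\lambda$ together with Euler's transformation $F_1(z) = F_0(z)(1-z)^{-1/2}$.

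The first step is to combine these with $\Delta = E_4^3/j$ and $k = 12n+4a+6b$ to rewrite
\[ P[E_{p-1}](j) = \frac{E_{p-1}}{\Delta^n E_4^a E_6^b} = \frac{E_{p-1}\cdot j^n F_1(\lambda)^b}{F_0(\lambda)^{k+b}}, \]
and then to reduce modulo $p$ using the Clausen--von Staudt congruence $E_{p-1} \equiv 1 \pmod p$, obtaining
\[ P[E_{p-1}](j) \equiv \frac{j^n F_1(\lambda)^b}{F_0(\lambda)^{p-1+b}} \pmod p. \]
Next I apply the Frobenius congruence $f(z)^p \equiv f(z^p) \pmod p$, valid for any $f \in \z_{(p)}[[z]]$. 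A short check shows that the coefficients of $F_0$ up to degree $n \leq (p-1)/12 < p$ are $p$-integral, so this applies to $F_0$ throughout the relevant range. Working modulo $\lambda^{n+1}$ one has $F_0(\lambda^p) \equiv 1$, whence $F_0(\lambda)^{p-1} \equiv F_0(\lambda)^{-1} \pmod{p,\lambda^{n+1}}$, and consequently
\[ \frac{F_1(\lambda)^b}{F_0(\lambda)^{p-1+b}} \equiv F_1(\lambda)^b\, F_0(\lambda)^{1-b} = F_b(\lambda) \pmod{p,\lambda^{n+1}}, \]
the last equality being immediate for both $b=0$ and $b=1$.

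Finally, multiplying through by $j^n$ turns the $\lambda^{n+1}$-truncation into a $1/j$-truncation and yields $P[E_{p-1}](j) \equiv j^n F_b(\lambda) \equiv U_n^b(j) \pmod{p,\,1/j}$; since both sides are polynomials in $j$ with no $1/j$ tail, this is an equality in $\f_p[j]$. The main obstacle I foresee is justifying the classical identity $E_4 = F_0(\lambda)^4$: this is not a formal statement but encodes the period structure of the universal elliptic fibration, and most rigorous proofs route through either its Picard--Fuchs equation or the Schwarzian-derivative characterization of $j(\tau)$. Everything after that is essentially formal, ultimately reducing to Fermat's little theorem applied coefficient-by-coefficient in the $\lambda$-expansion.
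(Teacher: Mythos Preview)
Your argument is correct. The identity $E_4 = F_0(\lambda)^4$ you flag as the only nontrivial input is precisely equation~\eqref{E4HG} in the paper, and your handling of the $b=1$ case via Euler's transformation, of $E_{p-1}\equiv 1$, and of the final ``both sides are polynomials in $j$'' step all go through; the $p$-integrality of the coefficients of $F_b$ up to degree $n_k$ is exactly what is needed for the truncated Frobenius step, and you verify that.

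Note, however, that the paper does not itself prove Theorem~\ref{C}: it is quoted from Kaneko--Zagier. What the paper does prove are the analogues Theorem~\ref{MainTHMA} and Theorem~\ref{HexP}, and the method there is essentially yours transposed into the language of modular forms. In the paper's scheme one multiplies the candidate polynomial by $\Delta^{n_k}E_4^{a_k}E_6^{b_k}$ to obtain a weight-$k$ form, rewrites it via the hypergeometric identity as (the relevant theta series times) $E_4^{p/c}$ plus $\mathcal O(q^{n_k+1})$, uses $E_4^{p/c}\equiv 1+\mathcal O(q^p)\pmod p$, and then invokes Lemma~\ref{almostzeroiszero} (a Sturm-type bound) to conclude the congruence of modular forms. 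Your Frobenius step $F_0(\lambda)^{p}\equiv 1\pmod{p,\lambda^{n_k+1}}$ is exactly the statement $E_4^{p/4}\equiv 1\pmod{p,q^{n_k+1}}$ read in the $\lambda$-variable, and your ``no $1/j$ tail'' observation is equivalent to Lemma~\ref{almostzeroiszero} after dividing out by $\Delta^{n_k}E_4^{a_k}E_6^{b_k}$. So the two routes differ only in bookkeeping: the paper packages the degree bound as a statement about modular forms, while you keep everything at the level of polynomials in $j$; neither gains or loses anything substantive over the other.
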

The main goal of this paper is to prove analogues of Theorems \ref{A}, \ref{B} and \ref{C} for a different modular setup that we outline in the next section.
	\subsection{Theta modular forms}	
Given a formal power series $f(q) \in \c[[q]]$ and an even integer $k \geq 4$, there is a unique modular form $\mathscr{C}_k f \in M_k(\sltwoz)$
	such that 
	\begin{equation}
		\label{constr}
		\mathscr{C}_k f - f = \mathcal{O}(q^{n_k+1}),
	\end{equation}
where $n_k$ is defined in \eqref{indicesnot}.
Example \ref{exmp1} below shows that there is indeed a natural construction of this form. We first introduce some notation.
For  a prime $p$ and $f,g \in \mathbb{Q}[[q]]$ with $p$-integral coefficients write
\[ f \equiv g + \mathcal{O}(q^m) \mod p\]
for some $m \geq 0$ if the first $m$ Fourier coefficients of $f$ and $g$ agree modulo $p$. Furthermore, write $f \equiv g \mod p$ if all Fourier coefficients agree modulo $p$.
\begin{exmp}
	\label{exmp1}
	Consider the formal power series $f = 1 \in \c[[q]]$. The resulting modular forms $\mathscr{C}_k 1$ are called \emph{extremal modular forms} and are related to the theory of extremal lattices, see for example \cite{Jenkins}. In 2007, W. Duke and P. Jenkins \cite{DukeJenkins} showed that the non-elliptic zeros of $\mathscr{C}_k 1$ are all simple and located on the arc $A$ inside $\mathcal{F}$, see \eqref{arc}, just as in the case of $E_k$. Thus, we see that the polynomials $P[\mathscr{C}_{k} 1](j)$ and $P[E_{k}](j)$ have similar factorisation behaviour in the ring $\mathbb{R}[j]$. But this is also the case in the ring $\mathbb{F}_p[j]$: indeed, one can study congruence properties of these modular forms.
	For primes $p \geq 5$ the congruence of the Bernoulli numbers \[ \frac{1}{B_{p-1}} \equiv 0 \mod p\] imply the congruence of power series \[E_{p-1} \equiv 1 \mod p,\]
	so that
	\[\mathscr{C}_{k} 1 \equiv E_{k} + \mathcal{O}(q^{n_k+1}) \mod p\]
	when $k = p-1$.
	We will see later in Lemma \ref{almostzeroiszero} that this implies the congruence $$P[\mathscr{C}_{k} 1](j) \equiv P[E_{k}] (j) \mod p.$$
\end{exmp}
We now introduce \emph{theta modular forms}. For a positive definite lattice $L$, define its theta series as
\[ \theta_L(\tau) = \sum_{x \in L} q^{||x||^2}.\] This is a holomorphic function on $\mathbb{H}$. We call $\mathscr{C}_k \theta_L(\tau)$ the \emph{theta modular form} of weight $k$ corresponding to the lattice $L$. In this paper we will consider the (one-dimensional) lattice $\mathbb{Z}$ and the (hexagonal) lattice $H = \mathbb{Z}(1,0) + \mathbb{Z}(\frac{1}{2}, \frac{1}{2}\sqrt{3})$.
Their theta series are
\begin{align*}
	\theta_\mathbb{Z} &= \sum_{n \in \mathbb{Z}} q^{n^2} = 1 + 2q + 2q^4 + 2q^9 + \cdots \,
\end{align*}
known as the \emph{Jacobi theta series} and
\begin{align*}
		\qquad \qquad \theta_H &= \sum_{m,n \in \mathbb{Z}} q^{m^2 + n^2 + mn} = 1 + 6q + 6q^3 + 6q^4 + \cdots \, .
\end{align*}
For these theta series it is known that $\theta_\mathbb{Z}^2 \in M_1(\Gamma_1(4))$ and $\theta_H \in M_1(\Gamma_1(3))$, see for example \cite{Cooper}.

\begin{exmp}
\label{k52}
For $k = 52$, the theta modular form corresponding to the lattice $\z$ is
	\begin{align*}
		\mathscr{C}_{52}\theta_{\z} &= 27800506386 E_4\Delta^4 - 776608440 E_4^4\Delta^3 +2887488 E_4^7\Delta^2 - 3118E_4^{10}\Delta + E_4^{13}\\
		&= 1 + 2q + 2q^2 + 2q^4 + 95037348924q^5 + 1017845969208768q^6 + \cdots \, .
	\end{align*}
\end{exmp}

In this paper we find analogues of Theorems \ref{A}, \ref{B} and \ref{C} for the modular forms $\mathscr{C}_k \theta_{\z}$ and $\mathscr{C}_k\theta_H$. Namely, we will write the polynomials $P[\mathscr{C}_{k}\theta_{\z}]$ and $P[\mathscr{C}_{k}\theta_{H}]$ as truncated hypergeometric functions modulo primes, similar to what is done for the Eisenstein series in Theorem \ref{C}, see  Theorems \ref{MainTHMA} and \ref{HexP} below. Furthermore, we will see that the factorisations of the polynomials $P[\mathscr{C}_{k}\theta_{\z}]$ and $P[\mathscr{C}_{k}\theta_{H}]$ over finite fields have a structure reminiscent to that of $P[E_k]$, as recorded in Theorems \ref{A} and \ref{B}. Over finite fields, $P[\mathscr{C}_{k}\theta_{\z}]$ factors as a product of linear factors, see Theorem \ref{MainTHMB}, while $P[\mathscr{C}_{k}\theta_{H}]$ factors as a product of quadratic factors only (and one linear factor if the degree of the polynomial is odd), see Theorem \ref{HexF}, whereas $P[E_k]$ factors as the product of both linear and quadratic factors.

\subsection{Results}
	
	In this section we state our results about factorisations of the polynomials $P[\mathscr{C}_{k}\theta_{\z}](j)$ and $P[\mathscr{C}_{k}\theta_{H}](j)$ over finite fields. Since $\mathscr{C}_k \theta_{\z}(\tau)$ and $\mathscr{C}_k \theta_H(\tau)$ have integer Fourier coefficients, the polynomials $P[\mathscr{C}_k \theta_{\z}](j)$ and $P[\mathscr{C}_k \theta_H](j)$ have integer coefficients as well. This means that reduction modulo primes is always well-defined.
	\subsubsection{Results for $\mathscr{C}_k \theta_{\z}(\tau)$}
	Define the polynomials $W_n^{0}(j)$ and $W_n^{1}(j)$ for $n \geq 0$ as the unique polynomials of degree $n$ satisfying
	\begin{align}
		j^n \cdot {}_{2} F_{1}\left(\frac{-1}{24},\frac{7}{24};\frac{3}{4};\frac{1728}{j}\right) &= j^n \cdot \left(1 - \frac{28}{j} - \frac{17112}{j^2} + \cdots \right) = W_n^{0}(j) + \mathcal{O}(1/j), \label{nr1}\\
		j^n \cdot {}_{2} F_{1}\left(\frac{11}{24},\frac{19}{24};\frac{3}{4};\frac{1728}{j}\right) &= j^n \cdot \left(1 + \frac{836}{j} + \frac{1078440}{j^2} + \cdots \right)  = W_n^{1}(j) + \mathcal{O}(1/j)\label{nr2}.
	\end{align}
	For these polynomials we have the following congruences.
	\begin{thm}
		\label{MainTHMA}
		Let $p \geq 7$ be a prime and $k = \frac{p+1}{2}$. Then
		\begin{equation}
			P[\mathscr{C}_k \theta_{\z}](j) \equiv W_{n_k}^{b_k}(j) \mod p,
		\end{equation}
where $b_k$ and $n_k$ are defined in \eqref{indicesnot}.
	\end{thm}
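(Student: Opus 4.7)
My approach is a direct analogue of the Kaneko--Zagier proof of Theorem~\ref{C}: I would produce a modular form in $M_k(\sltwoz)$ whose $P$-polynomial equals $W^{b_k}_{n_k}(j)$ and whose first $n_k+1$ Fourier coefficients are congruent to those of $\theta_{\z}$ modulo $p$; Lemma~\ref{almostzeroiszero} then yields $P[\mathscr{C}_k\theta_{\z}]\equiv W^{b_k}_{n_k}\pmod p$. Concretely, set
\[
G(\tau) \defis \Delta(\tau)^{n_k}E_4(\tau)^{a_k}E_6(\tau)^{b_k}\cdot W^{b_k}_{n_k}\bigl(j(\tau)\bigr)\in M_k(\sltwoz),
\]
so that $P[G](j) = W^{b_k}_{n_k}(j)$ by construction; the task becomes showing $G\equiv\theta_{\z}\pmod p$ up to Fourier order $q^{n_k+1}$.

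To rewrite $G$ usefully, I combine the defining relations \eqref{nr1}--\eqref{nr2}, namely $j^{n_k}\cdot{}_2F_1(a,b;3/4;1728/j) = W^{b_k}_{n_k}(j) + \mathcal{O}(1/j)$, with the identity $\Delta^{n_k}E_4^{a_k}E_6^{b_k}\cdot j^{n_k} = E_4^{(k-6b_k)/4}E_6^{b_k}$ (a consequence of $\Delta\cdot j = E_4^3$ and $k = 12n_k+4a_k+6b_k$). Multiplication converts the $\mathcal{O}(1/j)$ error into $\mathcal{O}(q^{n_k+1})$, giving
\[
G(\tau) = E_4(\tau)^{(k-6b_k)/4}\,E_6(\tau)^{b_k}\cdot{}_2F_1\bigl(a,b;\,3/4;\,1728/j(\tau)\bigr) + \mathcal{O}(q^{n_k+1}),
\]
with $(a,b)=(-1/24,7/24)$ if $b_k=0$ and $(a,b)=(11/24,19/24)$ if $b_k=1$. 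The theorem then reduces to the key $q$-expansion congruence
\begin{equation}\label{mainplan}
E_4^{(k-6b_k)/4}\,E_6^{b_k}\cdot{}_2F_1\bigl(a,b;\,3/4;\,1728/j(\tau)\bigr) \equiv \theta_{\z}(\tau) \pmod{p}
\end{equation}
valid to Fourier order $q^{n_k+1}$.

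To attack \eqref{mainplan} I would use the Frobenius congruence $\theta_{\z}(\tau)^{p}\equiv\theta_{\z}(p\tau)\pmod p$ together with $2k = p+1$ to obtain $\theta_{\z}^{\,2k}\equiv \theta_{\z}\cdot\theta_{\z}(p\tau)\equiv \theta_{\z}+\mathcal{O}(q^p)\pmod p$; since $n_k+1<p$, this identifies $\theta_{\z}$ modulo $p$ with $\theta_{\z}^{\,2k}$ up to Fourier order $q^{n_k+1}$. The classical Jacobi identities expressing $E_4,E_6,\Delta$ in terms of $\theta_{\z}$ and the modular lambda function $\lambda$ then let me rewrite the left-hand side of \eqref{mainplan} as $\theta_{\z}^{\,2k}\cdot R(\lambda)\cdot{}_2F_1(a,b;3/4;1728/j)$ for an explicit algebraic $R(\lambda)$ depending on $b_k$ (and $p$). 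So \eqref{mainplan} becomes the algebraic identity $R(\lambda)\cdot{}_2F_1(a,b;3/4;1728/j)\equiv 1\pmod p$ to order $q^{n_k+1}$, which I would derive from Clausen's formula
\[
{}_2F_1\bigl(\alpha,\beta;\,\alpha+\beta+\tfrac12;\,z\bigr)^{2} = {}_3F_2\bigl(2\alpha,\,2\beta,\,\alpha+\beta;\,2(\alpha+\beta),\,\alpha+\beta+\tfrac12;\,z\bigr)
\]
with $(\alpha,\beta)=(-1/24,7/24)$ (so that $\alpha+\beta+1/2 = 3/4$) and the Euler transformation ${}_2F_1(-1/24,7/24;3/4;z) = (1-z)^{1/2}\,{}_2F_1(11/24,19/24;3/4;z)$, which already links the two cases $b_k\in\{0,1\}$.

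The main obstacle is step \eqref{mainplan}. The delicate point is that the bottom parameter $3/4$ is $p$-independent while $k=(p+1)/2$ depends on $p$; the $p$-dependence enters through the Frobenius congruence for $\theta_{\z}^{\,p}$ and through the arithmetic of the Pochhammer symbols $(a)_m,(b)_m$: for $k=(p+1)/2$ one of $(a)_m,(b)_m$ first acquires a factor $\equiv 0\pmod p$ exactly at $m = n_k+1$, while $(3/4)_m\,m!$ stays $p$-integral and nonzero modulo $p$ throughout $m\leq n_k$. Consequently ${}_2F_1(a,b;3/4;z)$ truncates modulo $p$ to a polynomial of degree $n_k$ in $z$, and producing the explicit match with $R(\lambda)^{-1}$ modulo $p$ to Fourier order $q^{n_k+1}$ is the technical crux of the argument.
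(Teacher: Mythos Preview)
Your overall architecture matches the paper's exactly: you set $G=\Delta^{n_k}E_4^{a_k}E_6^{b_k}\,W^{b_k}_{n_k}(j)\in M_k$, reduce via Lemma~\ref{almostzeroiszero} to a Fourier-coefficient congruence, and you correctly note that Euler's transformation links the two cases $b_k\in\{0,1\}$. Up to your displayed congruence~\eqref{mainplan} the arguments are identical.

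The difference is in how \eqref{mainplan} is dispatched. The paper does \emph{not} go through Frobenius on $\theta_{\z}$, Jacobi's $\lambda$-formulas, or Clausen's identity. Instead it invokes a single exact (not merely mod~$p$) identity,
\[
\theta_{\z}(\tau)\;=\;E_4(\tau)^{1/8}\cdot{}_2F_1\!\Bigl(-\tfrac{1}{24},\tfrac{7}{24};\tfrac{3}{4};\tfrac{1728}{j(\tau)}\Bigr),
\]
valid as formal $q$-series (Lemma~\ref{lem22}). Substituting this into your expression for $G$ (case $b_k=0$) immediately gives
\[
G \;=\; \theta_{\z}\cdot E_4^{\,3n_k+a_k-1/8} + \mathcal{O}(q^{n_k+1}) \;=\; \theta_{\z}\cdot E_4^{\,p/8} + \mathcal{O}(q^{n_k+1}),
\]
and the only arithmetic input needed is the elementary $E_4^{\,p/8}\equiv 1+\mathcal{O}(q^p)\pmod p$, which follows from $E_4=1+\mathcal{O}(q)$ and the vanishing of $\binom{p/8}{m}$ modulo $p$ for $1\le m<p$. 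The case $b_k=1$ is reduced to this one by Euler's transformation, exactly as you anticipate.

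Your detour via $\theta_{\z}^{\,2k}\equiv\theta_{\z}$, the $\lambda$-parametrisation, and Clausen is not wrong in spirit---in effect it would be reproving Lemma~\ref{lem22} in disguise---but Clausen's formula is not the natural tool here: it produces a ${}_3F_2$ from ${}_2F_1{}^{2}$, whereas what you actually need is a closed algebraic expression for ${}_2F_1(-\tfrac{1}{24},\tfrac{7}{24};\tfrac34;\cdot)$ itself. That closed form is exactly Lemma~\ref{lem22} (equivalently, the cubic transformation \cite[Eq.~(28)]{VIDUNAS} used later in Lemma~\ref{lem25}). Once you have it, your ``technical crux'' evaporates: the congruence \eqref{mainplan} is an equality of $q$-series times $E_4^{p/8}$, and no truncated-series bookkeeping beyond $E_4^{p/8}\equiv1$ is required.
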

	Note how the parameters in Equations \eqref{nr1} and \eqref{nr2} are approximately halved compared to \eqref{kn1} and \eqref{kn2}.
	Using this hypergeometric expression for $P[\mathscr{C}_k \theta_{\z}](j) \mod p$, we will show that this polynomial splits over the ground field $\mathbb{F}_p$.
	\begin{thm}
		\label{MainTHMB}
		Let $p \geq 7$ be a prime and $k = \frac{p+1}{2}$. Then $P[\mathscr{C}_k \theta_{\z}](j)$ splits over $\mathbb{F}_p$ into linear factors.
	\end{thm}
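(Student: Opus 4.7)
By Theorem~\ref{MainTHMA}, it suffices to show that $W_{n_k}^{b_k}(j) \bmod p$ splits completely over $\f_p$ when $k = (p+1)/2$. The first key is to exploit Clausen's identity, which applies to $(a,b;c) = (-1/24, 7/24; 3/4)$ since $a + b + 1/2 = c$, giving
\begin{equation*}
	\bigl[{}_2F_1\bigl(-\tfrac{1}{24},\tfrac{7}{24};\tfrac{3}{4};z\bigr)\bigr]^2 \: =\: {}_3F_2\bigl(-\tfrac{1}{12},\tfrac{7}{12},\tfrac{1}{4};\tfrac{1}{2},\tfrac{3}{4};z\bigr),
\end{equation*}
while the $b_k = 1$ case reduces to this via Euler's transformation ${}_2F_1(\tfrac{11}{24},\tfrac{19}{24};\tfrac{3}{4};z) = (1-z)^{-1/2}\,{}_2F_1(-\tfrac{1}{24},\tfrac{7}{24};\tfrac{3}{4};z)$. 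Consequently $W_{n_k}^{b_k}(j)^2$ equals, up to an error polynomial of degree at most $n_k - 1$, the polynomial part $V_{2 n_k}(j)$ of $j^{2 n_k}$ times the above ${}_3F_2$ evaluated at $1728/j$ (with a correction factor $j/(j - 1728)$ when $b_k = 1$).

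Next, for $k = (p+1)/2$ a direct analysis of the Pochhammer factors shows that precisely one of $(-1/24)_m$, $(7/24)_m$, $(11/24)_m$, $(19/24)_m$ — determined by the residue of $p$ modulo $24$ — acquires a factor of $p$ at index $m = n_k + 1$, whence the coefficients of the relevant ${}_2F_1$ vanish modulo $p$ for $n_k < m \leq 2 n_k$. This forces the Clausen error to vanish modulo $p$ and yields the exact congruence $W_{n_k}^{b_k}(j)^2 \equiv V_{2 n_k}(j) \bmod p$. By itself this only confirms that $V_{2 n_k}(j) \bmod p$ is a perfect square in $\f_p[j]$, which is not yet enough to conclude splitting.

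The crux is therefore an independent factorization of $V_{2 n_k}(j) \bmod p$ as the square of a manifestly $\f_p$-split polynomial. The plan is to exploit the algebraic nature of ${}_2F_1(-1/24, 7/24; 3/4; z)$: by Schwarz's classification the Schwarz triple $(1/2, 1/3, 1/4)$ is of octahedral type, so this ${}_2F_1$ is algebraic of degree $24$ over $\q(z)$. I expect the desired second factorization to come either from a Dwork-style analysis of Frobenius on the associated hypergeometric local system, or more elementarily from an explicit quadratic transformation providing an alternative square-root representation of $V_{2 n_k}(j) \bmod p$ with $\f_p$-rational factors. The main obstacle is executing this algebraicity step uniformly across all four residues of $p$ modulo $24$ and extracting from it an explicit $\f_p$-split polynomial whose square is $V_{2 n_k}(j) \bmod p$; once this is in hand, the splitting of $W_{n_k}^{b_k}(j)$ itself follows up to sign from the square-root ambiguity.
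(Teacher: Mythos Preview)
Your proposal is incomplete: the decisive step is not carried out. You correctly observe that Clausen's identity yields $W_{n_k}^{b_k}(j)^2 \equiv V_{2n_k}(j) \bmod p$, but as you yourself acknowledge this only shows that $V_{2n_k}$ is a square in $\f_p[j]$, not that its square root splits. Everything after that is a plan (``I expect'', ``the main obstacle is'') rather than a proof: you never produce an $\f_p$-split polynomial whose square is $V_{2n_k}$, and neither the Schwarz classification nor a vague appeal to Frobenius on the hypergeometric local system furnishes such a factorization without substantial further work that you have not supplied. As it stands, nothing has been established beyond what Theorem~\ref{MainTHMA} already gives.

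The paper bypasses the squaring detour entirely. Instead of Clausen, it uses the transformation attached to the $\Gamma(2)$-covering: writing $j = 256(1-\lambda+\lambda^2)^3/\bigl(\lambda^2(\lambda-1)^2\bigr)$, the identity
\[
{}_2F_1\Bigl(-\tfrac{1}{24},\tfrac{7}{24};\tfrac{3}{4};\tfrac{27}{4}\,\tfrac{\lambda^2(\lambda-1)^2}{(1-\lambda+\lambda^2)^3}\Bigr) \;=\; (1-\lambda+\lambda^2)^{-1/8}\,{}_2F_1\Bigl(-\tfrac{1}{4},\tfrac{1}{4};\tfrac{1}{2};\lambda\Bigr)
\]
converts $P[\mathscr{C}_k\theta_{\z}](j)\bmod p$ into (a harmless rational factor times) the truncation $G_p(\lambda) = {}_2F_1\bigl(-\tfrac14,\tfrac14;\tfrac12;\lambda\bigr)_{(p+1)/4}$. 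The splitting of $G_p$ over $\f_p$ is then proved \emph{directly}: one computes the power sums $S_v(G_p)$ via Newton's identities and matches them with those of the explicit product $\prod(\lambda-t)$ over $t\in\f_p$ with $-t,\,t-1\in\f_p^{*2}$. Since $\lambda\mapsto j(\lambda)$ is defined over $\f_p$, the $\f_p$-rationality of the zeros of $P[\mathscr{C}_k\theta_{\z}](j)$ follows at once. In short, the paper trades your (unfinished) ${}_3F_2$ squaring for a change of Hauptmodul landing on a polynomial whose $\f_p$-roots can be listed explicitly; that explicit list is precisely the missing ingredient in your argument.
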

	Thus, Theorems \ref{MainTHMA} and \ref{MainTHMB} are analogues of Theorems \ref{C} and \ref{B} respectively.
	 The modular forms $\mathscr{C}_k \theta_{\z}$ for $k = \frac{p+1}{2}$ are congruent, up to a constant, to the weight $k$ modular forms $Th(\phi^{H_1}_{2k})$ defined by T. Miezaki in \cite{Miezaki}. Here $\phi^{H_1}_{2k}$ are certain invariant polynomials related to the genus $1$ average weight enumerator of binary, self-dual and doubly even codes of length $2 k$, see \cite{Oura}.
	 
	
	Explicitly, the zero set of $P[\mathscr{C}_k \theta_{\z}](j) \mod p$ is given by
	\begin{equation}
		\left  \{ \frac{256(1 - \lambda + \lambda^2)^3}{\lambda^2 (\lambda - 1)^2} \, \Bigm |  -\lambda, \lambda - 1 \in \mathbb{F}_p^{*2} \right \} \setminus \{0, 1728 \}.
	\end{equation}
	This zero set has an interpretation via elliptic curves. Let $p \geq 7$ be a prime and $\mathcal{E}$ an elliptic curve over $\mathbb{F}_p$ given by the equation
	\begin{equation}
		\mathcal{E}: Y^2 = X^3 + aX + b,
	\end{equation}
	where $a,b \in \mathbb{F}_p$, with $4a^3 + 27b^2 \not \equiv 0 \mod p$.
	Denote by $\mathcal{E}(\mathbb{F}_p)$ the group of $\mathbb{F}_p$-rational points on $\mathcal{E}$ and by $\mathcal{E}(\mathbb{F}_p)[n]$ the $n$-torsion subgroup of $\mathcal{E}(\mathbb{F}_p)$, where $n \in \mathbb{N}$. It is well known that $\mathcal{E}(\mathbb{F}_p)[n]$ is isomorphic to a subgroup of $\mathbb{Z}/n\mathbb{Z} \times \mathbb{Z}/n\mathbb{Z}$ \cite[Corollary 6.4]{Silverman}.
	We have the following result.
	\begin{thm}
		\label{MainTHMC}
		For a prime $p \geq 7$ and $k = \frac{p+1}{2}$, the congruence
		$$P[\mathscr{C}_k \theta_{\z}](j) \equiv \prod_{\substack{\mathcal{E}/\mathbb{F}_p \text{\large $/$} \cong \\ |\mathcal{E}(\mathbb{F}_p)[2]| = |\mathcal{E}(\mathbb{F}_p)[4]| = 4 \\ j(\mathcal{E}) \neq 0, 1728 }} (j -  j(\mathcal{E})) \quad \mod p$$
		holds. The product here is over elliptic curves defined over $\mathbb{F}_p$, up to $\overline{\mathbb{F}}_p$-isomorphism, with full rational $2$-torsion and no rational points of order $4$.
	\end{thm}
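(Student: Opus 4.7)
The plan is to combine the explicit description of the zero set of $P[\mathscr{C}_k\theta_\z](j) \bmod p$ stated just before the theorem with a direct analysis of $4$-torsion on Legendre elliptic curves. The rational function $256(1-\lambda+\lambda^2)^3/(\lambda^2(\lambda-1)^2)$ is the $j$-invariant of $\mathcal{E}_\lambda: y^2 = x(x-1)(x-\lambda)$, and every elliptic curve over $\mathbb{F}_p$ with full rational $2$-torsion and $j \neq 0, 1728$ is $\mathbb{F}_p$-isomorphic to $\mathcal{E}_\lambda$ or to its quadratic twist for some $\lambda \in \mathbb{F}_p \setminus \{0, 1\}$. Moreover, $k = (p+1)/2$ being an even integer forces $p \equiv 3 \pmod{4}$, so $-1 \notin \mathbb{F}_p^{*2}$; this will be used crucially below.

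The core of the proof is the claim: $\mathcal{E}_\lambda(\mathbb{F}_p)$ has no rational point of order $4$ iff $-\lambda$ and $\lambda - 1$ both lie in $\mathbb{F}_p^{*2}$. The $2$-torsion is $\{O, T_0, T_1, T_\lambda\}$ with $T_i = (e_i, 0)$ for $(e_0, e_1, e_\lambda) = (0, 1, \lambda)$, and a rational order-$4$ point amounts to a rational $P = (x_0, y_0)$ with $2P = T_i$ for some $i$. Setting up the doubling equation $(f'(x_0))^2 = 4 f(x_0)(2 x_0 - (1+\lambda) + e_i)$ with $f(x) = x(x-1)(x-\lambda)$, the quartic on the left factors as a perfect square of a quadratic in $x_0$, yielding $x_0 = \pm\sqrt{\lambda}$ with $y_0^2 = -\lambda(\sqrt{\lambda}-1)^2$ for $T_0$; $x_0 = 1\pm\sqrt{1-\lambda}$ with $y_0^2 = (1-\lambda)(1+\sqrt{1-\lambda})^2$ for $T_1$; and $x_0 = \lambda\pm\sqrt{\lambda(\lambda-1)}$ with $y_0^2 = \lambda(\lambda-1)\bigl(\sqrt{\lambda}+\sqrt{\lambda-1}\bigr)^2$ for $T_\lambda$. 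Since $-1 \notin \mathbb{F}_p^{*2}$, the $T_0$-case never admits a rational $P$, and the $T_1$-case admits one iff $1-\lambda \in \mathbb{F}_p^{*2}$, equivalently iff $\lambda - 1 \notin \mathbb{F}_p^{*2}$.

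The $T_\lambda$-case is the subtle point and the main obstacle. It requires both $\lambda(\lambda-1) \in \mathbb{F}_p^{*2}$ (so that $x_0$ is rational) and $\bigl(\sqrt{\lambda}+\sqrt{\lambda-1}\bigr)^2 \in \mathbb{F}_p^{*2}$ (so that $y_0$ is rational). When $\lambda, \lambda-1$ are both squares, the latter is manifest. When $\lambda, \lambda-1$ are both non-squares -- the only other way $\lambda(\lambda-1)$ can be a square -- both $\sqrt{\lambda}$ and $\sqrt{\lambda-1}$ lie in $\mathbb{F}_{p^2} \setminus \mathbb{F}_p$ and are negated by Frobenius, so their sum also lies in $\mathbb{F}_{p^2} \setminus \mathbb{F}_p$; hence $\bigl(\sqrt{\lambda}+\sqrt{\lambda-1}\bigr)^2$ is a Frobenius-fixed non-square in $\mathbb{F}_p$. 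Therefore the $T_\lambda$-case gives a rational $P$ iff $\lambda, \lambda - 1 \in \mathbb{F}_p^{*2}$. Combining the three cases, $\mathcal{E}_\lambda$ has no rational $4$-torsion iff $\lambda - 1 \in \mathbb{F}_p^{*2}$ and $\lambda \notin \mathbb{F}_p^{*2}$, which (since $-1 \notin \mathbb{F}_p^{*2}$) is precisely the condition $-\lambda, \lambda - 1 \in \mathbb{F}_p^{*2}$.

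Finally, using $2(p+1) \equiv 0 \pmod 8$ and the relation $\#\mathcal{E}_\lambda^d(\mathbb{F}_p) = 2(p+1) - \#\mathcal{E}_\lambda(\mathbb{F}_p)$, the no-$4$-torsion condition (equivalent to $\#\mathcal{E}_\lambda(\mathbb{F}_p) \equiv 4 \pmod 8$ in the presence of full rational $2$-torsion) is preserved under quadratic twist and is therefore a property of the $j$-invariant alone. A short check that $-\lambda, \lambda - 1 \in \mathbb{F}_p^{*2}$ is $S_3$-invariant on the Legendre parameter $\lambda \mapsto 1-\lambda, 1/\lambda, \ldots$ then matches the admissible $j$-invariants with the $S_3$-orbits indexing the explicit zero set, completing the proof.
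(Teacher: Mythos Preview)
Your proposal is correct and follows essentially the same route as the paper. The paper packages the $4$-torsion computation as a separate lemma proved via the factorisation $\psi_4(X,Y)=4Y(X^2-\lambda)(X^2-2X+\lambda)(X^2-2\lambda X+\lambda)$ of the $4$-division polynomial, whereas you recover the same three quadratics directly from the doubling formula and then carry out the identical square/non-square case analysis (your Frobenius argument for the both-non-squares subcase is exactly what the paper's ``similar computation'' hides). For the passage from $\lambda$ to $j$, the paper simply quotes that for $p\equiv 3\pmod 4$ every curve over $\mathbb{F}_p$ with full rational $2$-torsion is already $\mathbb{F}_p$-isomorphic to some $\mathcal{E}_\lambda$, while you instead check twist-invariance of the no-$4$-torsion condition and $S_3$-invariance of the square conditions on $\lambda$; both routes establish that the condition depends only on the $j$-invariant.
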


	\begin{exmp}
		For $p=103$, we have $k = 52, a_k = 1$ and $b_k = 0$. Continuing the computation in Example \ref{k52} we obtain $$P[\mathscr{C}_{52} \theta_{\z}](j) = \frac{\mathscr{C}_{52} \theta_{\z}}{E_4\Delta^4} = j^4 - 3118 j^3 + 2887488 j^2 - 776608440 j + 27800506386.$$
		Now $$P[\mathscr{C}_{52} \theta_{\z}](j) \equiv (j - 58)(j - 89)(j - 93)(j - 97)  \mod p$$
		and $$P[\mathscr{C}_{52} \theta_{\z}](j) \equiv j^4 - 28j^3 - 17112j^2  - 16085280j + 18044467104 \equiv W^{0}_{4}(j) \mod p.$$
		Computing the $j$-invariants of the elliptic curves
		$$\mathcal{E}: \ Y^2 = X^3 + aX + b, \quad a,b \in \mathbb{F}_p,$$
		with $\mathcal{E}(\mathbb{F}_p)[2] = \mathcal{E}(\mathbb{F}_p)[4] \cong \mathbb{Z}/2\mathbb{Z} \times \mathbb{Z}/2\mathbb{Z}$
		we indeed find $j(\mathcal{E}) \in \{0, 58, 89, 93, 97 \}$, in agreement with the statement in Theorem \ref{MainTHMC}.
	\end{exmp}
	\subsubsection{Results for $\mathscr{C}_k \theta_H(\tau)$}
	Define the polynomials $V^{0}_n(j)$ and $V^{1}_n(j)$ for $n \geq 0$ as the 
	unique polynomials of degree $n$ satisfying
	\begin{align*}
		j^n \cdot {}_{2} F_{1}\left(-\frac{1}{12},\frac{1}{4};\frac{2}{3};\frac{1728}{j}\right) &= j^n \cdot \left(1 - \frac{54}{j} - \frac{32076}{j^2} + \cdots \right) = V^{0}_n(j) + \mathcal{O}(1/j),\\
		j^n \cdot {}_{2} F_{1}\left(\frac{5}{12},\frac{3}{4};\frac{2}{3};\frac{1728}{j}\right) &= j^n \cdot \left(1 + \frac{810}{j} + \frac{1041012}{j^2} + \cdots \right) = V^{1}_n(j) + \mathcal{O}(1/j).
	\end{align*}
	We have the following congruences for them.
	\begin{thm}
		\label{HexP}
		Let $p$ be a prime congruent to $5$ or $11$ modulo $12$ and $k = p + 1$. Then we have the congruence
		\begin{equation*}
			P[\mathscr{C}_k \theta_H](j) \equiv V^{b_k}_{n_k}(j) \mod p,
		\end{equation*}
for the choice of $b_k$ and $n_k$ as in \eqref{indicesnot}.
	\end{thm}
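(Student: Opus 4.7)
The plan is to follow the three-step template that works for Theorem~\ref{MainTHMA} in the Jacobi case. First, by the defining property \eqref{constr} of $\mathscr{C}_{p+1}\theta_H$ together with Lemma~\ref{almostzeroiszero}, it suffices to exhibit any $G_p \in M_{p+1}(\sltwoz)$ with $p$-integral $q$-expansion whose first $n_{p+1}+1$ Fourier coefficients agree with those of $\theta_H$ modulo $p$: then automatically $G_p \equiv \mathscr{C}_{p+1}\theta_H \pmod{p}$, and hence $P[G_p](j) \equiv P[\mathscr{C}_{p+1}\theta_H](j) \pmod{p}$. The existence of $G_p$ is not itself the issue, since the basis \eqref{basis} makes the associated $(n_{p+1}+1) \times (n_{p+1}+1)$ linear system triangular; the task is to produce $G_p$ in a form whose $j$-polynomial is recognisable modulo $p$.

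For this construction, the arithmetic hypothesis $p \equiv 2 \pmod{3}$ --- equivalently $\chi_{-3}(p) = -1$, so that $3$ is inert in $\mathbb{F}_p$ --- is essential. Since $\theta_H$ is a multiple of the weight-$1$ Eisenstein series on $\Gamma_0(3)$ attached to $\chi_{-3}$, and since the identity $\theta_H^2 = -\tfrac{1}{2}\bigl(E_2(\tau) - 3 E_2(3\tau)\bigr)$ realises $\theta_H^2$ as a genuine weight-$2$ form on $\Gamma_0(3)$, the natural ingredients for $G_p$ are Kummer-congruent combinations of the level-$1$ Eisenstein series $E_{p+1}$ (which satisfies $E_{p+1} \equiv E_2 \pmod p$) with the ``$3$-stabilised'' weight-$(p+1)$ form $E_{p+1}(\tau) - 3^{p+1} E_{p+1}(3\tau)$, together with the Hasse invariant $E_{p-1} \equiv 1 \pmod{p}$ and powers of $\Delta$. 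The inertness of $3$ modulo $p$ is what allows the level-$3$ contributions appearing in such combinations to be absorbed into a level-$1$ $q$-expansion modulo $p$.

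With a convenient $G_p$ in hand, the identification $P[G_p](j) \equiv V^{b_k}_{n_k}(j) \pmod p$ proceeds via the Picard--Fuchs framework. Writing $G_p = \Delta^{n_k} E_4^{a_k} E_6^{b_k} \cdot P[G_p](j)$ and passing to the $j$-coordinate via $j = E_4^3/\Delta$ and $j-1728 = E_6^2/\Delta$, the polynomial $P[G_p](j)$ satisfies a second-order linear ODE in $j$ whose exponents at the singular points $j = 0$, $1728$, $\infty$ are dictated by the weight $p+1$, the index $b_k$, and the character $\chi_{-3}$. This is precisely the hypergeometric equation whose formal power series solution at $j = \infty$ is, up to a power of $j$, either ${}_2F_1(-\tfrac{1}{12}, \tfrac{1}{4}; \tfrac{2}{3}; 1728/j)$ for $b_k = 0$ or ${}_2F_1(\tfrac{5}{12}, \tfrac{3}{4}; \tfrac{2}{3}; 1728/j)$ for $b_k = 1$; comparing leading terms at $j = \infty$ then forces the claimed congruence.

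The principal obstacle is the second step: writing down $G_p$ explicitly and verifying that its $q$-expansion matches that of $\theta_H$ in the first $n_{p+1}+1$ coefficients modulo $p$ requires a careful interplay between Kummer congruences, the level-$3$ structure of $\theta_H$, and the condition $\chi_{-3}(p) = -1$ (the reason the primes $p \equiv 1 \pmod{3}$ are excluded from the statement). The third step is comparatively mechanical once the hypergeometric parameters $(-\tfrac{1}{12}, \tfrac{1}{4}; \tfrac{2}{3})$ and $(\tfrac{5}{12}, \tfrac{3}{4}; \tfrac{2}{3})$ have been pinned down from the weight and character data, and can be executed via Gauss's contiguous relations or a direct Picard--Fuchs comparison.
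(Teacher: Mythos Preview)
Your reduction to Lemma~\ref{almostzeroiszero} is correct, but the two substantive steps you propose are both more fragile than you acknowledge, and the paper bypasses them entirely by running the argument in the opposite direction.

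In step~3 you claim that ``the polynomial $P[G_p](j)$ satisfies a second-order linear ODE in $j$'' with the relevant hypergeometric parameters. This is not true as stated: a degree-$n_k$ truncation of a hypergeometric series does not satisfy the hypergeometric equation (check the coefficient one step beyond the truncation). What does satisfy that equation is $\theta_H/E_4^{1/4}$ as a function of $1728/j$, and that is precisely the content of Lemma~\ref{lem21}. So any route through Picard--Fuchs must ultimately appeal to Lemma~\ref{lem21}, and once you grant that lemma there is no need to argue via an ODE for $P[G_p]$ at all.

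Step~2 is also shakier than it looks. Your proposed ingredients --- $E_{p+1}\equiv E_2$, the form $E_{p+1}(\tau)-3^{p+1}E_{p+1}(3\tau)$, and the identity $\theta_H^2=-\tfrac12\bigl(E_2(\tau)-3E_2(3\tau)\bigr)$ --- naturally produce congruences for $\theta_H^2$, not for $\theta_H$; passing from one to the other inside level~$1$ is not a formality. And the assertion that inertness of $3$ lets one ``absorb level-$3$ contributions into a level-$1$ $q$-expansion modulo~$p$'' is not a recognised mechanism.

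The paper avoids all of this by reversing the logic: rather than building $G_p$ and then identifying $P[G_p]$, it \emph{defines}
\[
g_k(\tau)\;=\;V^{b_k}_{n_k}(j(\tau))\,\Delta(\tau)^{n_k}E_6(\tau)^{b_k}\in M_{p+1}
\]
directly from the target polynomial (note $a_k=0$ since $k\equiv 0,6\bmod 12$). By the very definition of $V^{b_k}_{n_k}$ as a truncation, and by Lemma~\ref{lem21} (together with Euler's transformation when $b_k=1$), one obtains the exact formal identity
\[
g_k\;=\;\theta_H\cdot E_4^{p/4}\;+\;\mathcal{O}(q^{n_k+1}),
\]
where $E_4^{p/4}$ is read as a $1$-unit formal power series in $q$. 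The only arithmetic input is then the elementary observation $E_4^{p/4}\equiv 1+\mathcal{O}(q^p)\pmod p$, immediate from the binomial series since $\binom{p/4}{n}$ carries a factor of $p$ for $1\le n<p$. Lemma~\ref{almostzeroiszero} closes the argument. No Kummer congruences, no level-$3$ Eisenstein constructions, and no ODE for the polynomial are needed; the hypergeometric content is used once, in the forward direction, through Lemma~\ref{lem21}.
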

	Furthermore, the polynomials $P[\mathscr{C}_k \theta_H](j)$ have a specific factorisation modulo $p$.
	\begin{thm}
		\label{HexF}
		Let $p$ be a prime congruent to $5$ or $11$ modulo $12$ and $k = p + 1$. The polynomials $P[\mathscr{C}_k \theta_H](j)$ split over $\mathbb{F}_{p^2}$. Moreover, these polynomials factor over $\mathbb{F}_p$ as a product of quadratic factors only if the degree $n_k$ is even and as $(j + 1728)$ times a product of quadratic factors if the degree $n_k$ is odd.
	\end{thm}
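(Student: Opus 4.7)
Plan. By Theorem \ref{HexP}, the problem reduces to establishing the stated factorisation for the truncated hypergeometric polynomial $V^{b_k}_{n_k}(j) \bmod p$. The crucial structural input is that the defining hypergeometric parameters $(\alpha,\beta;\gamma)\in\{(-\tfrac{1}{12},\tfrac{1}{4};\tfrac{2}{3}),(\tfrac{5}{12},\tfrac{3}{4};\tfrac{2}{3})\}$ satisfy both $\gamma-\alpha-\beta=\pm\tfrac{1}{2}$ and $\gamma=1+\alpha-\beta$: the former is the signature of a classical quadratic transformation of ${}_2F_1$, and the latter is Kummer's condition, allowing a closed-form evaluation at $z=-1$. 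The plan is to use both features to reduce to an earlier factorisation result and to pin down the behaviour at $j=-1728$.

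I would first apply a quadratic transformation via a degree-two substitution $j=\psi(u)\in\mathbb{Q}(u)$ whose unique finite critical value is $j=-1728$, chosen so that ${}_2F_1(\alpha,\beta;\gamma;1728/j)$ transforms into a ${}_2F_1$ with parameters of the shape $(\tfrac{1}{12},\tfrac{5}{12};1)$ or $(\tfrac{7}{12},\tfrac{11}{12};1)$, that is, of the type defining the $U^{\bullet}_{\bullet}$ polynomials. After truncation and reduction modulo $p$, this should identify $V^{b_k}_{n_k}(\psi(u))\bmod p$ (up to clearing denominators) with $U^{b_k}_{m}(u)\bmod p$ for a suitable $m$. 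By Theorems \ref{B} and \ref{C}, the latter factors as a product of linear factors (whose roots are supersingular $j$-invariants in $\mathbb{F}_p$) and irreducible quadratic factors (whose roots lie in $\mathbb{F}_{p^2}\setminus\mathbb{F}_p$). I would then transport this factorisation back to $V^{b_k}_{n_k}(j)$ via a case analysis of how the Frobenius and the covering involution $\sigma$ of $\psi$ act on the root set of $U^{b_k}_{m}$. The substitution $\psi$ must be tuned so that the linear-in-$u$ factors of $U^{b_k}_m$ either correspond to the ramification point of $\psi$ (producing the lone $j+1728$ linear factor at the unique branch value $j=-1728$) or lie outside the relevant part of $\operatorname{image}(\psi)$. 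The arithmetic input needed here is the congruence $p\equiv 2\pmod 3$ (equivalent to $p\equiv 5,11\pmod{12}$), which makes $j=0$ supersingular and helps pin down which linear supersingular $u$-factors are absorbed or removed by $\psi$.

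To finish, I would compute $V^{b_k}_{n_k}(-1728)\bmod p$ directly using Kummer's identity
\[
 {}_2F_1(\alpha,\beta;1+\alpha-\beta;-1) \;=\; \frac{\Gamma(1+\alpha-\beta)\,\Gamma(1+\tfrac{\alpha}{2})}{\Gamma(1+\alpha)\,\Gamma(1+\tfrac{\alpha}{2}-\beta)},
\]
together with a Wilson-type congruence for the resulting $\Gamma$-factor ratio at half-integer shifts, and verify that this vanishes modulo $p$ exactly when $n_k$ is odd; this yields the $(j+1728)$ factor in the odd case and its absence in the even case. The main obstacle is the transformation step: identifying the precise quadratic $\psi$ and rigorously matching the truncated hypergeometric polynomial to a specific $U^{b_k}_m$ modulo $p$, and then verifying that the Frobenius/$\sigma$ interaction forces every non-ramified fibre to contribute an irreducible quadratic factor in $\mathbb{F}_p[j]$ (rather than a pair of linear factors). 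This is precisely what distinguishes the hexagonal case from the Jacobi case of Theorem \ref{MainTHMB}, where the analogous transformation produced complete splitting over $\mathbb{F}_p$.
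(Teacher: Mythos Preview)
Your central step---a quadratic transformation taking ${}_2F_1(-\tfrac{1}{12},\tfrac{1}{4};\tfrac{2}{3};\cdot)$ (or its $b_k=1$ companion) to a ${}_2F_1$ with parameters $(\tfrac{1}{12},\tfrac{5}{12};1)$ or $(\tfrac{7}{12},\tfrac{11}{12};1)$---cannot exist. The exponent differences of the $V$-type hypergeometric at $0,1,\infty$ are $(1/3,1/2,1/3)$, so it is the tetrahedral case in Schwarz's list and is an \emph{algebraic} function of its argument. The $U$-type hypergeometric has exponent differences $(0,1/2,1/3)$, the $\mathrm{PSL}_2(\mathbb{Z})$ case, and is transcendental. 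No rational (or algebraic) substitution $j=\psi(u)$, together with multiplication by an algebraic prefactor, can turn an algebraic function into a transcendental one. Hence there is no identification of $V^{b_k}_{n_k}(\psi(u))\bmod p$ with $U^{b_k}_m(u)\bmod p$, and the appeal to Theorems~\ref{B} and~\ref{C} collapses. (Incidentally, the conditions $\gamma-\alpha-\beta=\tfrac{1}{2}$ and $\gamma=1+\alpha-\beta$ you correctly noticed do yield classical quadratic transformations, but they keep $\gamma=\tfrac{2}{3}$ fixed; they do not produce $\gamma=1$.)

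The paper exploits exactly the algebraicity you are implicitly fighting against. It uses the degree-$4$ covering coming from the $\Gamma_1(3)$ Hauptmodul,
\[
j \;=\; \frac{3^3 4^4 (2y-1)^3}{y(y+4)^3},
\]
under which the tetrahedral hypergeometric collapses to the closed form $(1-2y)^{-1/4}$. After truncation and reduction modulo $p$ this yields
\[
(y(y+4)^3)^{n_k}\,P[\mathscr{C}_k\theta_H]\!\left(\tfrac{3^3 4^4 (2y-1)^3}{y(y+4)^3}\right)\;\equiv\; L_{b_k}(y)\,\bigl(y^{(p+1)/3}+2^{1/3}\bigr)\pmod p,
\]
a single binomial in $y$. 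Splitting over $\mathbb{F}_{p^2}$ is immediate since $(p+1)/3\mid p^2-1$; and for any root $a$ one has $a^p=-2/a$, whence the corresponding $j$-value $\beta$ satisfies $\beta^p=1728^2/\beta$, so $\beta\in\mathbb{F}_p$ forces $\beta=-1728$. A degree count shows this root is simple, giving the $(j+1728)$ factor exactly when $n_k$ is odd.

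Your Kummer-at-$-1$ idea for detecting the $(j+1728)$ factor is a reasonable side computation, but on its own it does not establish splitting over $\mathbb{F}_{p^2}$ or exclude further $\mathbb{F}_p$-rational roots; it would need to be grafted onto a correct structural argument like the one above.
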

Explicitly, its zero set is
\begin{equation}
	\label{ZeroHexa}	\left\{ \frac{3^3 4^4 (2 a -1)^3}{a(a+4)^3} \, \Bigm | \,  a^{(p+1)/3} + 2^{1/3} = 0, \, a \in \mathbb{F}_{p^2}  \right\} \setminus \{0,1728\},
\end{equation}
where $2^{1/3}$ is the unique cube root of $2$ in $\mathbb{F}_p$.

There is no clear analogy with Theorem \ref{MainTHMC} for the polynomials $P[\mathscr{C}_k \theta_H](j)$. However, by an explicit calculation we get the following characterisation of the zero set \eqref{ZeroHexa}.
\begin{prop}
	The zero set \eqref{ZeroHexa} coincides with the set of $j$-invariants of the elliptic curves
	\begin{equation}
		\mathcal{E}_b: X^3 + Y^3 + 1 = 3 b X Y,
	\end{equation}
 whenever it is non-singular, with
	$b \in \mathbb{F}_{p^2}$ satisfying $b^{p+1} \equiv -2 \mod p$. 
\end{prop}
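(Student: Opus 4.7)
The plan is to make the correspondence explicit by computing $j(\mathcal{E}_b)$ as a rational function of $b^3$, identifying this with $3^3 \cdot 4^4 (2a-1)^3/(a(a+4)^3)$ under a suitable substitution, and finally matching the two parameter conditions $b^{p+1} \equiv -2$ and $a^{(p+1)/3} \equiv -2^{1/3}$ modulo $p$.

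For the $j$-invariant, I first observe that the affine point $(-1,0)$ lies on $\mathcal{E}_b$ for every $b$, and the tangent line there, $X + bY + 1 = 0$, meets the curve only at $(-1,0)$ with multiplicity three: substituting $X = -1 - bY$ into the defining equation collapses it to $(1 - b^3)Y^3 = 0$. Hence $(-1,0)$ is a flex, which I take as the origin of the group law. A standard Nagell-type reduction (translate the flex to the origin, use the tangent as the line at infinity, depress the cubic, rescale) produces a short Weierstrass model $Y^2 = X^3 + A(b)X + B(b)$. The discriminant $4A^3 + 27B^2$ simplifies dramatically thanks to the polynomial identity
\[ (b^6 - 20b^3 - 8)^2 - b^3(b^3 + 8)^3 = -64(b^3 - 1)^3, \]
and one obtains
\[ j(\mathcal{E}_b) = \frac{27\, b^3 (b^3 + 8)^3}{(b^3 - 1)^3}, \]
with $\mathcal{E}_b$ non-singular precisely when $b^3 \neq 1$.

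I then set $a = -4/b^3$, and a direct manipulation rewrites the rational function above as $3^3 \cdot 4^4 (2a-1)^3 / (a(a+4)^3)$, matching the expression in \eqref{ZeroHexa}. To translate the parameter constraints, if $b \in \mathbb{F}_{p^2}^*$ satisfies $b^{p+1} \equiv -2 \pmod{p}$, then
\[ a^{(p+1)/3} = \frac{(-4)^{(p+1)/3}}{b^{p+1}} = \frac{(-4)^{(p+1)/3}}{-2}. \]
Since $-4 \in \mathbb{F}_p^*$ and $p \equiv 2 \pmod{3}$, cubing is a bijection on $\mathbb{F}_p^*$, and $(-4)^{(p+1)/3}$ is the unique cube root in $\mathbb{F}_p$ of $(-4)^{p+1} = 16 = 2^3 \cdot 2$, namely $2 \cdot 2^{1/3}$; therefore $a^{(p+1)/3} = -2^{1/3}$. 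Conversely, for any $a \in \mathbb{F}_{p^2}^*$ with $a^{(p+1)/3} = -2^{1/3}$, a direct computation gives $(-4/a)^{(p^2-1)/3} = 1$, so $-4/a$ is a cube in $\mathbb{F}_{p^2}^*$ and any cube root $b$ obeys $b^{p+1} = (-4/a)^{(p+1)/3} = -2$; the three cube roots $b, \zeta_3 b, \zeta_3^2 b$ (with $\zeta_3 \in \mathbb{F}_{p^2}$) produce $\mathbb{F}_{p^2}$-isomorphic curves and hence share the same $j$-invariant.

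The smoothness condition $b^3 \neq 1$ translates to $a \neq -4$, which is exactly where the rational expression in $a$ is finite, and the excluded values $\{0, 1728\}$ are handled identically on both sides. The main obstacle is the Weierstrass reduction in the first step, which is bookkeeping-heavy; the correct substitution $a = -4/b^3$ can in any case be anticipated by matching the zero and pole divisors of the two degree-four rational maps $\mathbb{P}^1 \to \mathbb{P}^1_j$ (triple pole of $j$ at $a = -4$ versus $b^3 = 1$, simple pole at $a = 0$ versus $b = \infty$, and correspondingly structured zero loci).
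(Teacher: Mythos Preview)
The paper does not actually supply a proof of this proposition; it only asserts that the statement follows ``by an explicit calculation'' and then moves on. Your proposal carries out precisely that calculation --- the $j$-invariant formula $j(\mathcal{E}_b) = 27\,b^3(b^3+8)^3/(b^3-1)^3$ is the standard one for the Hesse pencil, the substitution $a = -4/b^3$ and the matching of the conditions $b^{p+1} = -2$ and $a^{(p+1)/3} = -2^{1/3}$ are correct (your identification $(-4)^{(p+1)/3} = 2\cdot 2^{1/3}$ is exactly the computation the paper uses elsewhere in the proof of Lemma~\ref{lem213}), and the non-singularity bookkeeping is fine --- so this is the intended argument.
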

The curves $\mathcal{E}_b$, known as \emph{Hessian curves}, have full rational $3$-torsion \cite[Theorem 10]{Moody}, $\mathcal{E}_b(\mathbb{F}_{p^2})[3] \cong \mathbb{Z}/3 \mathbb{Z} \times \mathbb{Z}/3 \mathbb{Z}$, indicating an analogy with Theorem \ref{MainTHMC}. However, the condition $b^{p+1} \equiv -2 \mod p$ \, seems to lack a good interpretation. 
\begin{exmp}
		Take $p = 107$. In this case $k = 108$ and we find out that 
		\[ \mathscr{C}_{108} \theta_H = 1 + 6q + 6q^3 + 6q^4 + 12q^7 + 6q^9 + 1496265431568669020160q^{10} + \cdots \]
		and 
		\begin{align*} P[\mathscr{C}_{108} \theta_H](j) = \frac{\mathscr{C}_{108} \theta_H}{\Delta^{9}} &= j^9 - 6474 \, j^8 + 16858944 \, j^7 - 22595806434 \, j^6 + 16561497291750 \, j^5 \\& \qquad - 6514224685621164 \, j^4 + 1257337803035458656 \, j^3 \\& \qquad- 97749420668058422880 \, j^2 + 1958195577341989938240 \, j \\& \qquad - 2139590870258478384000.
		\end{align*}
		We check the congruence properties in Theorems \ref{HexF} and \ref{HexP}.
		We see that
		\[P[\mathscr{C}_{108} \theta_H](j) \equiv j^9 - 54 j^8 - 32076 j^7 + \cdots \equiv V_{9}^{0}(j) \mod p\]
		and 
		\[ P[\mathscr{C}_{108} \theta_H](j) \equiv (j + 16)(j^2 + 42)(j^2 + 6j + 42)(j^2 + 33j + 42)(j^2 + 105j + 42) \mod p,
		\]
		so that $P[\mathscr{C}_{108} \theta_H](j)$ factors as $(j+1728)$ times quadratic factors over $\mathbb{F}_p$.
	\end{exmp}
	
	
	\section{Proofs}
	We start with the following basic observation.
	\begin{lem}
		\label{almostzeroiszero}
		Let $k \geq 4$ and $p \geq 5$ a prime.
		Suppose $f \in M_k$ has $p$-integral Fourier coefficients and
		\begin{equation*}
			f \equiv \mathcal{O}(q^{n_k + 1}) \mod p \, 
		\end{equation*} (that is, the first $n_k$ Fourier coefficients of $f$ are divisible by $p$). Then $f \equiv 0 \mod p$.
	\end{lem}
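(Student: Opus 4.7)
The plan is to exploit the explicit basis \eqref{basis} of $M_k$. Since $\dim M_k = n_k + 1$, and the hypothesis $f \equiv \mathcal{O}(q^{n_k+1}) \bmod p$ imposes the vanishing modulo $p$ of the first $n_k + 1$ Fourier coefficients of $f$, I expect these congruences to pin down $f$ uniquely modulo $p$. To make this precise, write
\[
    f \= \sum_{\ell = 0}^{n_k} c_\ell\, B_\ell, \qquad B_\ell \defis \Delta^{n_k - \ell} E_4^{a_k + 3\ell} E_6^{b_k},
\]
with $c_\ell \in \q$. Because $\Delta = q + \mathcal{O}(q^2)$ and $E_4, E_6 \in 1 + q\,\z[[q]]$, each $B_\ell$ has integer Fourier expansion of the form $q^{n_k - \ell} + \mathcal{O}(q^{n_k - \ell + 1})$. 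Consequently the linear map sending the coordinate vector $(c_0, \ldots, c_{n_k})$ to the vector of leading Fourier coefficients $(a_0, \ldots, a_{n_k})$ of $f$ is given by an integer matrix which, after reversing one of the index orderings, is upper-triangular with $1$'s on the diagonal; in particular it is unimodular.

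This unimodularity gives two things. First, since the $a_i$ are $p$-integral by hypothesis, so are the $c_\ell$. Second, after reduction modulo $p$ the triangular system remains invertible, so the vanishing conditions $a_0 \equiv a_1 \equiv \cdots \equiv a_{n_k} \equiv 0 \bmod p$ force $c_{n_k}, c_{n_k - 1}, \ldots, c_0 \equiv 0 \bmod p$ by successive back-substitution: $a_0 \equiv 0$ yields $c_{n_k} \equiv 0$; then $a_1 \equiv 0$ combined with $c_{n_k} \equiv 0$ yields $c_{n_k-1} \equiv 0$; and so on down to $c_0$. Therefore $f = \sum_\ell c_\ell B_\ell \equiv 0 \bmod p$, as claimed.

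There is essentially no obstacle here: the argument is pure triangular back-substitution built on the explicit basis \eqref{basis}. The only point requiring a brief check is the $p$-integrality of the $c_\ell$, which is automatic from the fact that the change-of-basis matrix has integer entries and determinant $\pm 1$. The hypothesis $p \geq 5$ enters only through the well-known fact that $E_4, E_6$ and $\Delta$ all have integer (hence $p$-integral) Fourier expansions in this range, so that the basis elements $B_\ell$ reduce sensibly modulo $p$.
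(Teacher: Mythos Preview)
Your proof is correct and follows essentially the same approach as the paper: both use the explicit basis \eqref{basis} and the fact that $B_\ell = q^{n_k-\ell} + \mathcal{O}(q^{n_k-\ell+1})$, which makes the passage from Fourier coefficients to basis coefficients triangular with $1$'s on the diagonal. The paper's proof is terser and phrases this as the reductions $B_\ell \bmod p$ forming a basis of the $\mathbb{F}_p$-space, whereas you spell out the back-substitution and the $p$-integrality of the $c_\ell$ explicitly; one small quibble is that your closing remark about $p\geq 5$ is not quite the reason---$E_4,E_6,\Delta$ have integral Fourier coefficients unconditionally, so your argument in fact works for every prime.
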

	\begin{proof}
		Suppose $f$ satisfies the conditions of the lemma.
		Note that the $\mathbb{F}_p$-vector space $\{g \mod p \ | \ g \in M_k, \ g \textnormal{ is $p$-integral} \}$ is $(n_k + 1)$-dimensional. It is easy to see that
		\begin{equation}
			\{ \Delta^{n_k - \ell} E_4^{a_k + 3\ell}E_6^{b_k} \mod p \, | \, 0 \leq \ell \leq n_k \}
		\end{equation}
		is a basis for this space, thus $f \equiv 0 \mod p$.
	\end{proof}
Given two modular forms $f,g \in M_k$ with $p$-integral Fourier coefficients, Lemma \ref{almostzeroiszero} implies that they agree modulo $p$ if their first $n_k+1$ Fourier coefficients coincide modulo $p$.
	
	\subsection{Proofs for $\mathscr{C}_k \theta_{\z}$}
	We start with a hypergeometric identity for the function $\theta_{\z}(\tau)$.
	\begin{lem}
		In a neighborhood of $\tau = \ii \infty$, we have
		\label{lem22}
		\begin{equation}
			\label{DEGEsq}
			\theta_{\z}(\tau) = E_4(\tau)^{1/8} \, {}_2 F_{1} \left(\frac{-1}{24},\frac{7}{24};\frac{3}{4};\frac{1728}{j(\tau)}\right).
		\end{equation} 
	\end{lem}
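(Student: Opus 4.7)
My plan is to prove \eqref{DEGEsq} by the ODE method. Setting $t := 1728/j(\tau)$, this $t$ is a local holomorphic coordinate near $\tau = \ii\infty$ vanishing there, with $t = 1728 q + \mathcal{O}(q^2)$. Both sides of \eqref{DEGEsq} are holomorphic in a neighborhood of $\tau = \ii\infty$ with value $1$ at the cusp, so it suffices to show that $F(\tau) := \theta_{\z}(\tau)/E_4(\tau)^{1/8}$, viewed as a function of $t$, satisfies the same second-order linear ODE as $G(t) := {}_2F_1\!\left(-\tfrac{1}{24}, \tfrac{7}{24}; \tfrac{3}{4}; t\right)$ with matching first-order behavior.

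The function $G(t)$ is the unique power-series solution with $G(0) = 1$ of the hypergeometric equation
\[
t(1-t)\, G''(t) + \left(\tfrac{3}{4} - \tfrac{5}{4} t\right) G'(t) + \tfrac{7}{576}\, G(t) = 0.
\]
To derive the analogous ODE for $F$, I would use the Ramanujan derivative identities $D E_4 = (E_2 E_4 - E_6)/3$, $D E_6 = (E_2 E_6 - E_4^2)/2$, and $D\Delta = E_2 \Delta$ (with $D := q\, d/dq$), which together give $Dt = t\, E_6/E_4$; combined with a derivative identity for $\theta_{\z}$ itself. For the latter, since $\theta_{\z}^2 \in M_1(\Gamma_1(4))$, its weight-$1$ Serre derivative $\vartheta_1(\theta_{\z}^2) := D(\theta_{\z}^2) - \tfrac{1}{12} E_2\, \theta_{\z}^2$ lies in the finite-dimensional space $M_3(\Gamma_1(4))$ and hence equals a concrete linear combination of known generators. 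Iterating $D$ and converting $\tau$-derivatives to $t$-derivatives via the chain rule, while using $E_4^3 - E_6^2 = 1728\Delta = t\, E_4^3$ to eliminate $E_6^2$, should reduce the result precisely to the hypergeometric equation above for $F$.

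The main obstacle is the explicit identification of $\vartheta_1(\theta_{\z}^2)$ inside $M_3(\Gamma_1(4))$ and the bookkeeping needed to carry the change of variables from $\tau$ to $t$ through cleanly. Once the ODE for $F(t)$ is in hand, the initial conditions are straightforward: $F(0) = 1$ is immediate from $\theta_{\z}(\ii\infty) = E_4(\ii\infty) = 1$, and a short $q$-expansion computation gives $\theta_{\z} E_4^{-1/8} = 1 - 28 q + \mathcal{O}(q^2)$ and hence $F(t) = 1 - (7/432)\, t + \mathcal{O}(t^2)$, which matches the first two coefficients of $G$. By the uniqueness of the analytic solution at the regular singular point $t = 0$ with exponent $0$, we conclude $F(t) = G(t)$ in a neighborhood of $t = 0$, which is the identity \eqref{DEGEsq}.
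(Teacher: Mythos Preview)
Your proposal is correct and follows essentially the same strategy as the paper's proof: both argue that the two sides of \eqref{DEGEsq}, viewed as functions of $t = 1728/j$, satisfy the same second-order linear ODE with matching initial data at $t=0$, and then conclude by uniqueness. The paper is terser, invoking the general fact (Zagier's Proposition~21) that a weight-$1$ modular form such as $\theta_{\z}^2$ satisfies a second-order linear ODE in $1728/j$ and leaving the verification as ``easy to check,'' whereas you spell out an explicit route via Ramanujan's derivative identities and the Serre derivative of $\theta_{\z}^2$; your check of $F'(0)$ is in fact redundant (the exponents at $t=0$ are $0$ and $1/4$, so the holomorphic solution with $F(0)=1$ is already unique), but harmless.
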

	\begin{proof}
		The theta series $\theta_{\z}(\tau)^2$ is a modular form of weight $1$ for the congruence subgroup $\Gamma_1(4)$. Therefore, by \cite[Proposition 21]{Zagbook}, it satisfies a second order linear differential equation as a function in $1728/j$. 
		 As \begin{equation}
			\label{E4HG}
			E_4(\tau)^{1/4} = {}_2 F_{1} \left(\frac{1}{12},\frac{5}{12};1;\frac{1728}{j(\tau)}\right),
		\end{equation} see \cite[Eq. (74)]{Zagbook}, and the ${}_2 F_{1}$ satisfies a second order linear differential equation, it is easy to check that the left-hand side and the right-hand side of \eqref{DEGEsq} satisfy the same differential equation and initial values.
	\end{proof}

	\begin{proof}[Proof of Theorem \ref{MainTHMA}]
		Let $p \geq 7$ be a prime and $k = \frac{p+1}{2}$. Consider the modular form
		\[ h_k(\tau) =  W_{n_k}^{b_k} (j(\tau))\Delta(\tau)^{n_k}E_4(\tau)^{a_k} E_6(\tau)^{b_k} \]
		of weight $k$.
		In order to show that $P[\mathscr{C}_k \theta_{\z}](j)$ and $ W_{n_k}^{b_k}(j)$ agree modulo $p$ as polynomials in $j$, it suffices to show the congruence 
		\begin{equation}
			\label{DeEerste}
			h_k \equiv \mathscr{C}_k \theta_{\z} \mod p 
		\end{equation}
		for power series in $q$.
		 We first assume $k \equiv 0, 4\mod 12$, i.e. $b_k = 0$. Using Lemma \ref{lem22} we find
		\begin{align*}
		h_k &= \left(j^{n_k} {}_{2} F_{1}\left(-\frac{1}{24},\frac{7}{24};\frac{3}{4};\frac{1728}{j}\right) + \mathcal{O}(1/j) \right)\Delta^{n_k}E_4^{a_k} \\
			&= \theta_{\z}E_4^{3n_k + a_k - 1/8} + \mathcal{O}(q^{n_k + 1}).
		\end{align*}
		As $3n_k + a_k - 1/8 = p/8$, we see that $$E_4^{3n_k + a_k - 1/8} =  E_4^{p/8} \equiv 1 + \mathcal{O}(q^{p}) \mod p,$$
		therefore
		\[h_k \equiv \mathscr{C}_k \theta_{\z} + \mathcal{O}(q^{n_k+1}) \mod p. \] 
		From Lemma \ref{almostzeroiszero} we conclude that $h_k \equiv \mathscr{C}_k \theta_{\z} \mod p$.
		\newline
		Now assume $k \equiv 6, 10 \mod 12$, i.e. $b_k = 1$. Using Euler's transformation formula for hypergeometric functions \cite[Theorem 2.2.5]{Andrews} we find
		\begin{align*}
			{}_{2} F_{1}\left(\frac{11}{24},\frac{19}{24};\frac{3}{4};\frac{1728}{j}\right) &= \left( 1 - \frac{1728}{j} \right)^{-1/2}     {}_{2} F_{1}\left(\frac{-1}{24},\frac{7}{24};\frac{3}{4};\frac{1728}{j}\right)\\
			&= E_4^{3/2}E_6^{-1} {}_{2} F_{1}\left(\frac{-1}{24},\frac{7}{24};\frac{3}{4};\frac{1728}{j}\right).
		\end{align*}
		Hence
		\begin{align*}
			h_k(\tau) &= \left( j^{n_k} {}_{2} F_{1}\left(\frac{11}{24},\frac{19}{24};\frac{3}{4};\frac{1728}{j}\right) + \mathcal{O}(1/j)  \right) \Delta^{n_k} E_4^{a_k}E_6\\
			&= \theta_{\z}E_4^{3n_k + a_k + 11/8} + \mathcal{O}(q^{n_k + 1}).
		\end{align*}
		As before we have $3n_k + a_k + 11/8 = p/8$, so that 
		\begin{equation}
			h_k \equiv \mathscr{C}_k \theta_{\z} + \mathcal{O}(q^{n_k+1}) \mod p.
		\end{equation}
		Again, Lemma \ref{almostzeroiszero} implies $h_k \equiv \mathscr{C}_k \theta_{\z} \mod p$.
		Thus $P[\mathscr{C}_k \theta_{\z}](j) \equiv W_{b_k}^{n_k}(j)$ for all $p \geq 7$.
	\end{proof}
 
 	We will now work towards the proof of Theorem \ref{MainTHMB}. The goal is to make a choice of a Hauptmodul $t_2(\tau)$ for the group $\Gamma(2)$ and write $j(\tau)$ as a rational function in $t_2(\tau)$. This induces a variable transformation for the polynomial $P[\mathscr{C}_k \theta_{\z}](j)$ 
 	that simplifies the treatment of the zeros. Consider the \emph{modular lambda function} 
 	\[\lambda(\tau) = 16 \left( \frac{\eta(\tau) \eta^2(4 \tau)}{\eta^3(2 \tau)} \right)^8 = 16q^{1/2} - 128q + 704 q^{3/2} - 3072 q^2 + 11488 q^{5/2} + \cdots,\]
 	where $$\eta(\tau) = q^{1/24} \prod_{n = 1}^\infty (1- q^n) \, $$ 
 	is the \emph{Dedekind eta function}. 	 Choose $\lambda(\tau)$ as Hauptmodul $t_2(\tau)$ for the group $\Gamma(2)$ and write $j(\tau)$ as a rational function in $t_2(\tau)$:
 	\[j(\tau) =  \frac{256(1 - t_2(\tau) + t_2(\tau)^2)^3}{t_2(\tau)^2(t_2(\tau) - 1)^2}.\]

 	Using Lemmas \ref{lem25} and \ref{lem27} below, we will write the zeros of $P[\mathscr{C}_k \theta_{\z}] \mod p$ in terms of $t_2$.
 	
 	We first start with some technical statements.
	\begin{lem}
		\label{lem13}
		For $n \in \mathbb{Z}_{\geq 0}$ let $p$ be a prime $p \in \{24n - 1, 24n + 7 \}$.  For $m \geq 0$ denote by $c_m \in \mathbb{Q}$ the $m$-th coefficient in the expansion of ${}_2 F_{1}\left(-\frac{1}{24},\frac{7}{24};\frac{3}{4};x\right)$. Then $c_m \equiv 0 \mod p$ for $n < m < 6n$. 
	\end{lem}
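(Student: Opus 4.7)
The plan is to extract the $p$-adic valuation of the hypergeometric coefficient directly from its explicit form
\[
c_m = \frac{(-1/24)_m\,(7/24)_m}{(3/4)_m\,m!} = \frac{\prod_{k=0}^{m-1}(24k-1)(24k+7)}{24^m \cdot 6^m \cdot m! \cdot \prod_{k=0}^{m-1}(4k+3)},
\]
and to show that $v_p(c_m) \geq 1$ whenever $n < m < 6n$. The case $n = 0$ makes the hypothesis $n < m < 6n$ vacuous, so one may assume $n \geq 1$ and $p \geq 23$. In particular the prefactor $24^m \cdot 6^m$ contributes nothing to $v_p(c_m)$, so one only has to count how often $p$ divides the four arithmetic progressions $24k-1$, $24k+7$, $4k+3$, and the integers up to $m$.

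First I would analyse the numerator. For $p = 24n - 1$ the congruence $24k - 1 \equiv 0 \mod p$ has the unique solution $k \equiv n \mod p$, and $24k + 7 \equiv 0 \mod p$ has the unique solution $k \equiv -7 \cdot 24^{-1} \equiv -7n \equiv 17n - 1 \mod p$, using $24n \equiv 1 \mod p$. For $p = 24n + 7$ symmetric computations give the root $k \equiv n \mod p$ for $24k + 7$, and, using $24^{-1} \equiv 5 + 17n \mod p$, the root $k \equiv 5 + 17n \mod p$ for $24k - 1$. Since $k$ ranges only over $[0, m-1] \subseteq [0, 6n-2]$, and since both $17n - 1$ and $5 + 17n$ exceed $6n - 1$ while remaining below $p$, the only contribution to $v_p$ comes from $k = n$. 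This contribution is exactly $1$, because $|24k \pm 1|$ and $24k + 7$ all stay strictly below $6p$ in the range, so no factor can be divisible by $p^2$.

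Next I would verify that the denominator contributes nothing. Because $m < 6n < p$, one has $v_p(m!) = 0$. For the factor $4k + 3$, inverting $4$ gives $4^{-1} \equiv 6n \mod (24n - 1)$ and $4^{-1} \equiv 6n + 2 \mod (24n + 7)$, so $4k + 3 \equiv 0 \mod p$ has the unique solution $k \equiv 6n - 1 \mod p$ or $k \equiv 6n + 1 \mod p$ respectively; in either case the hypothesis $m < 6n$ places this root strictly outside $[0, m-1]$. Combining the two counts yields $v_p(c_m) = 1$, hence $c_m \equiv 0 \mod p$.

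I expect the main obstacle to be just the careful residue bookkeeping that pins down $n < m < 6n$ as exactly the range where the numerator acquires one factor of $p$ while the denominator acquires none. The upper bound is tight: at $m = 6n$ the denominator picks up the factor $4(6n-1) + 3 = 24n - 1 = p$ when $p = 24n - 1$, which would cancel the single power of $p$ from the numerator and break the conclusion.
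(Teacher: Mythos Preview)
Your proof is correct and follows essentially the same approach as the paper: both compute the $p$-adic valuation of $c_m$ by locating which terms in the Pochhammer products $(-1/24)_m$, $(7/24)_m$, $(3/4)_m$, and $m!$ are divisible by $p$. Your version is considerably more detailed than the paper's terse two-line sketch---you work out the explicit residues for both cases $p=24n-1$ and $p=24n+7$ and even verify that $v_p(c_m)=1$ exactly, whereas the paper only states $\nu_p((-1/24)_m)\ge 1$ for $m>n$ and $\nu_p((3/4)_m)\ge 1$ for $m\ge 6n$ in the first case and defers the second to ``similar''.
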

	\begin{proof}
		Consider the prime of the form $p = 24n -1$.
		By considering the $p$-adic valuation $\nu _p$ of the coefficients,
		we find $\nu _p ( (-\frac{1}{24})_m) \geq 1$ if and only if $m > n$ and $\nu _p ((
		\frac{3}{4})_m) \geq 1$ if and only if $m \geq 6n$. The case of $p=24n+7$ is similar.
	\end{proof}
	
	\begin{lem}
		\label{lem14}
		For $n \in \mathbb{Z}_{\geq 0}$ let $p$ be a prime $p \in \{24n + 11, 24n + 19 \}$.  For $m \geq 0$ denote by $c_m \in \mathbb{Q}$ the $m$-th coefficient in the expansion of ${}_2 F_{1}\left(\frac{11}{24},\frac{19}{24};\frac{3}{4};x\right)$. Then $c_m \equiv 0 \mod p$ for $n < m < 6n$. 
	\end{lem}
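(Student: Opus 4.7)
The plan is to run the $p$-adic valuation bookkeeping of Lemma \ref{lem13} verbatim on
\[
c_m \= \frac{(11/24)_m \, (19/24)_m}{(3/4)_m \, m!}.
\]
Since $p \geq 11$ is coprime to $24$, I may replace each Pochhammer $(a/d)_m$ by the integer product $\prod_{j=0}^{m-1}(dj + a)$ without changing $\nu_p$. Moreover $m < 6n < p$ in the range of interest, so $\nu_p(m!) = 0$. Thus the task reduces to locating, in each of the two arithmetic progressions containing $p$, the smallest index $j \geq 0$ at which $24j + 11$, $24j + 19$, or $4j + 3$ first becomes divisible by $p$.

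For $p = 24n + 11$: the congruence $24j + 11 \equiv 0 \pmod p$ first holds at $j = n$, so $(11/24)_m$ acquires a factor of $p$ exactly when $m > n$. On the other hand $4j + 3 \equiv 0 \pmod p$ first at $j = 6n + 2$, and a short computation modulo $24$ (solving $24 j \equiv -19 \pmod p$, which forces the multiplier $k$ in $kp - 19 = 24j$ to satisfy $k \equiv 17 \pmod{24}$) shows $24j + 19 \equiv 0$ only for $j \geq 17n + 7$. Both indices exceed $6n - 1$, so in the range $n < m < 6n$ the denominator and the second numerator Pochhammer contribute no $p$-factor while the first numerator contributes at least one, giving $\nu_p(c_m) \geq 1$.

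The case $p = 24n + 19$ is symmetric: $(19/24)_m$ picks up a factor of $p$ starting at $j = n$, whereas $4j + 3$ and $24j + 11$ first vanish modulo $p$ at $j = 6n + 4$ and $j = 17n + 13$ respectively, both larger than $6n - 1$. This again yields $\nu_p(c_m) \geq 1$ for $n < m < 6n$.

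I do not expect a genuine obstacle: the whole argument is a finite check on residues modulo $24$. The only point that calls for care is verifying that the \emph{wrong} numerator Pochhammer does not accidentally contribute an earlier $p$-factor; this is settled by the short modular arithmetic computation sketched above.
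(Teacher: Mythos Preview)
Your argument is correct and matches the paper's approach exactly: track the $p$-adic valuation of each Pochhammer factor and observe that one numerator acquires a factor of $p$ from $m>n$ onward while the denominator $(3/4)_m\,m!$ stays $p$-free throughout $m<6n$. Your closing remark about the ``wrong'' numerator Pochhammer is unnecessary---an extra factor of $p$ in the numerator could only increase $\nu_p(c_m)$, never decrease it---but the computation you give there is correct and does no harm.
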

	\begin{proof}
		Similar to the proof of Lemma \ref{lem13}.
	\end{proof}

	For a prime $p$ congruent to $3$ modulo $4$, define the truncated hypergeometric function
	\begin{equation}
		G_p(\lambda) := {}_{2} F_{1}\left(-\frac{1}{4}, \frac{1}{4} ; \frac{1}{2};\lambda \right)_{\left(\frac{p+1}{4}\right)} = \sum_{m=0}^{\frac{p+1}{4}}\frac{(-\frac{1}{4})_m (\frac{1}{4})_m}{(\frac{1}{2})_m m!} \lambda^m,
	\end{equation}
	i.e. the hypergeometric function truncated at $\lambda^{\frac{p+1}{4}}$.
	\begin{lem}
		\label{lem25}
		Let $k = \frac{p+1}{2}$. Then the polynomial $P[\mathscr{C}_k \theta_{\z}]$ satisfies the transformation
		\[ \left( \frac{\lambda^2 (\lambda  - 1)^2}{256}\right)^{n_k} \cdot P[\mathscr{C}_k \theta_{\z}]\left( \frac{256(1 - \lambda + \lambda^2)^3}{\lambda^2(\lambda - 1)^2} \right) \equiv L_{a_k,b_k}(\lambda) \cdot
		G_p(\lambda) \mod p, \]
		where 
		\begin{align*}
			L_{a_k,b_k}(\lambda) = \frac{1}{(1 - \lambda + \lambda^2)^{a_k}(1 - \frac{3}{2} \lambda - \frac{3}{2} \lambda^2 + \lambda^3)^{b_k}}.\\
		\end{align*}
	\end{lem}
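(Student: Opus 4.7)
The plan is to combine Theorem~\ref{MainTHMA} with an exact algebraic identity on the $\lambda$-side, and then pass to the mod~$p$ statement using Lemmas~\ref{lem13} and~\ref{lem14}. By Theorem~\ref{MainTHMA} it suffices to prove the stated congruence with $W_{n_k}^{b_k}$ in place of $P[\mathscr{C}_k\theta_{\z}]$. Setting $J(\lambda) := 256(1-\lambda+\lambda^2)^3/[\lambda^2(\lambda-1)^2]$, $X(\lambda) := 1728/J(\lambda)$, and $F_{b_k}(X) := {}_2F_1(\alpha_{b_k},\beta_{b_k};3/4;X)$ with $(\alpha_0,\beta_0) = (-\tfrac{1}{24},\tfrac{7}{24})$ and $(\alpha_1,\beta_1) = (\tfrac{11}{24},\tfrac{19}{24})$, the definition of $W_{n_k}^{b_k}$ as the polynomial part of $j^{n_k}F_{b_k}(1728/j)$ yields the exact identity
\[ \Bigl(\tfrac{\lambda^2(\lambda-1)^2}{256}\Bigr)^{n_k} W_{n_k}^{b_k}(J(\lambda)) = (1-\lambda+\lambda^2)^{3n_k} \bigl[ F_{b_k}(X(\lambda)) \bigr]_{(n_k)}, \]
where $[\,\cdot\,]_{(n_k)}$ denotes truncation of the hypergeometric at order $n_k$ in $X$.

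The crucial step is the exact identity of formal power series in $\lambda$
\[ (1-\lambda+\lambda^2)^{3n_k + a_k}\bigl(1-\tfrac{3}{2}\lambda-\tfrac{3}{2}\lambda^2+\lambda^3\bigr)^{b_k} F_{b_k}(X(\lambda)) = (1-\lambda+\lambda^2)^{p/8}\cdot{}_2F_1(-\tfrac{1}{4},\tfrac{1}{4};\tfrac{1}{2};\lambda), \]
which I will derive from three classical ingredients, with $\theta_3(\tau) := \sum_n q^{n^2/2}$. First, Jacobi's identity $2E_4 = \theta_2^8 + \theta_3^8 + \theta_4^8$ together with $\theta_2^4 = \lambda \theta_3^4$ and $\theta_4^4 = (1-\lambda)\theta_3^4$ gives $E_4 = (1-\lambda+\lambda^2)\theta_3^8$, whence $E_6 = (1-\tfrac{3}{2}\lambda-\tfrac{3}{2}\lambda^2+\lambda^3)\theta_3^{12}$ via $E_6^2/E_4^3 = 1-1728/j$, and $\Delta = \theta_3^{24}\lambda^2(\lambda-1)^2/256$. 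Second, Lemma~\ref{lem22} combined with the Euler transformation used in the proof of Theorem~\ref{MainTHMA} to handle $b_k=1$ gives the uniform formula $F_{b_k}(X) = E_4^{-1/8 + (3/2)b_k}E_6^{-b_k}\theta_{\z}$. Third, the doubling identity $2\theta_{\z}^2 = \theta_3^2 + \theta_4^2$ rearranges to $\theta_{\z}/\theta_3 = \sqrt{(1+\sqrt{1-\lambda})/2} = {}_2F_1(-\tfrac{1}{4},\tfrac{1}{4};\tfrac{1}{2};\lambda)$. Substituting these expressions into the left-hand side and using the arithmetic identities $3n_k + a_k + (3/2)b_k = (p+1)/8$ and $24n_k + 8a_k + 12b_k = 2k = p+1$, all $\theta_3$-powers telescope with $\theta_{\z}/\theta_3$ and the $E_4$-exponent collapses to $p/8$, leaving the stated right-hand side.

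To pass to mod~$p$, I first note that by Lemmas~\ref{lem13} and~\ref{lem14}, $[F_{b_k}(X)]_{(n_k)} \equiv F_{b_k}(X) \pmod{p,\,\lambda^{12 n_k}}$, since the coefficients $c_m$ of $F_{b_k}$ with $n_k<m<6n_k$ vanish mod~$p$ and $X^{6n_k} = O(\lambda^{12 n_k})$. A short binomial computation shows $\binom{p/8}{k} \equiv 0 \pmod p$ for $1 \leq k < p$, hence $(1-\lambda+\lambda^2)^{p/8} \equiv 1 \pmod{p,\,\lambda^p}$. Combining these with the exact identity and multiplying the lemma through by the polynomial $(1-\lambda+\lambda^2)^{a_k}(1-\tfrac{3}{2}\lambda-\tfrac{3}{2}\lambda^2+\lambda^3)^{b_k}$, both sides become polynomials in $\lambda$ of degree $(p+1)/4$; this degree is strictly less than both $12n_k$ and $p$ whenever $n_k\geq 1$, so both polynomials reduce modulo~$p$ to the degree-$(p+1)/4$ truncation $G_p(\lambda)$ of ${}_2F_1(-\tfrac{1}{4},\tfrac{1}{4};\tfrac{1}{2};\lambda)$. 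The three exceptional cases $n_k=0$, i.e.\ $p\in\{7,11,19\}$, are checked by direct calculation. The main obstacle is the exact identity of the second paragraph: the cleanest derivation relies on classical theta-function identities not previously invoked in the paper, and carefully tracking the fractional powers of $E_4$, $E_6$ and $\theta_3$ as consistent formal power series requires some care.
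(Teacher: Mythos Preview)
Your argument is correct and follows essentially the same route as the paper: both proofs hinge on the same cubic hypergeometric transformation relating ${}_2F_1(-\tfrac{1}{24},\tfrac{7}{24};\tfrac{3}{4};\cdot)$ composed with $j\mapsto\lambda$ to ${}_2F_1(-\tfrac{1}{4},\tfrac{1}{4};\tfrac{1}{2};\lambda)$, then use Lemmas~\ref{lem13}/\ref{lem14} to control the tail mod~$p$, the congruence $(1-\lambda+\lambda^2)^{p/8}\equiv 1+\mathcal O(\lambda^p)$, and a degree count in~$\lambda$.

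The one genuine difference is how that key transformation is obtained. The paper simply cites it as \cite[Eq.~(28)]{VIDUNAS}. You instead \emph{derive} it from classical theta relations: $E_4=(1-\lambda+\lambda^2)\theta_3^8$, $E_6=(1-\tfrac{3}{2}\lambda-\tfrac{3}{2}\lambda^2+\lambda^3)\theta_3^{12}$, Lemma~\ref{lem22}, and the doubling identity $2\theta_{\z}^2=\theta_3^2+\theta_4^2$ together with ${}_2F_1(-\tfrac14,\tfrac14;\tfrac12;\lambda)=\sqrt{(1+\sqrt{1-\lambda})/2}$. This is a nice self-contained justification that makes the modular origin of the Vidunas identity transparent, at the cost of importing several theta identities not otherwise used in the paper. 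Your bookkeeping of the fractional exponents is correct (everything sits consistently in $1+q\,\mathbb{Q}[[q]]$, and the exponent collapses via $3n_k+a_k+\tfrac32 b_k-\tfrac18=p/8$), and your handling of the boundary cases $n_k=0$ is appropriate.
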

	\begin{proof}
		\emph{Cases $k \equiv 0, 4 \mod 12$.} In this case we have $b_k = 0$. Lemma \ref{lem13} implies 
		\[P[\mathscr{C}_k \theta_{\z}](j) \equiv j^{n_k} {}_2 F_{1}\left(-\frac{1}{24},\frac{7}{24};\frac{3}{4};\frac{1728}{j}\right) + \mathcal{O}(1/j^{5n_k}) \mod p.\]
		Applying the hypergeometric identity 
		\[ {}_2 F_{1}\left(-\frac{1}{24},\frac{7}{24};\frac{3}{4};\frac{27}{4} \frac{\lambda^2 (\lambda - 1)^2}{(1 - \lambda + \lambda^2)^3}\right) = (1 - \lambda + \lambda^2)^{-1/8} \cdot {}_2 F_{1}\left(-\frac{1}{4},\frac{1}{4};\frac{1}{2};\lambda \right),\] see \cite[Eq. (28)]{VIDUNAS},
		we find out that \begin{align*}
			L_{a_k,0}(\lambda)^{-1} \left( \frac{\lambda^2 (\lambda  - 1)^2}{256}\right)^{n_k} &\cdot P[\mathscr{C}_k \theta_{\z}]\left( \frac{256(1 - \lambda + \lambda^2)^3}{\lambda^2(\lambda - 1)^2} \right) \\ &\qquad \equiv (1 - \lambda + \lambda^2)^{3n_k + a_k - 1/8} \cdot {}_2 F_{1}\left(-\frac{1}{4},\frac{1}{4};\frac{1}{2};\lambda \right) + \mathcal{O}(\lambda^{12n_k}) \mod p
		\end{align*} as a power series in $\mathbb{F}_p[[\lambda]]$.
		Since 
		\[(1-\lambda + \lambda^2)^{3n_k + a_k - 1/8}= (1-\lambda + \lambda^2)^{p/8} \equiv 1 + \mathcal{O}(\lambda^p) \mod p\]
		and the left-hand side is a polynomial of degree $(p+1)/4$ in $\lambda$, we conclude that
		\[ \left( \frac{\lambda^2 (\lambda  - 1)^2}{256}\right)^{n_k} \cdot P[\mathscr{C}_k \theta_{\z}]\left( \frac{256(1 - \lambda + \lambda^2)^3}{\lambda^2(\lambda - 1)^2} \right) \equiv L_{a_k,0}(\lambda) \cdot
		G_p(\lambda) \mod p. \]
		\emph{Cases $k \equiv 6, 10 \mod 12$.} In this case we have $b_k = 1$.
		The identity
		\begin{align*}
			{}_{2} F_{1}\left(\frac{11}{24},\frac{19}{24};\frac{3}{4};\frac{1728}{j}\right) &= \left( 1 - \frac{1728}{j} \right)^{-1/2}  \cdot {}_{2} F_{1}\left(\frac{-1}{24},\frac{7}{24};\frac{3}{4};\frac{1728}{j}\right)\\
			&= \frac{(1 - \lambda + \lambda^2)^{3/2}}{1 - \frac{3}{2}\lambda - \frac{3}{2}\lambda^2 + \lambda^3} \cdot {}_{2} F_{1}\left(\frac{-1}{24},\frac{7}{24};\frac{3}{4};\frac{1728}{j}\right)
		\end{align*}
	gives, together with Lemma \ref{lem14},
		\begin{align*} L_{a_k,1}(\lambda)^{-1}  \left( \frac{\lambda^2 (\lambda  - 1)^2}{256}\right)^{n_k} &\cdot P[\mathscr{C}_k \theta_{\z}]\left( \frac{256(1 - \lambda + \lambda^2)^3}{\lambda^2(\lambda - 1)^2} \right) \mod p\\ & \qquad \equiv (1 - \lambda + \lambda^2)^{3n_k + a_k + 11/8} \cdot {}_2 F_{1}\left(-\frac{1}{4},\frac{1}{4};\frac{1}{2};\lambda \right) + \mathcal{O}(\lambda^{12n_k})\\
			&\qquad \equiv {}_2 F_{1}\left(-\frac{1}{4},\frac{1}{4};\frac{1}{2};\lambda \right) + \mathcal{O}(\lambda^{12n_k}).
		\end{align*}
		Again comparing the first $(p+1)/4$ coefficients on both sides we find out that
		\[ \left( \frac{\lambda^2 (\lambda  - 1)^2}{256}\right)^{n_k} \cdot P[\mathscr{C}_k \theta_{\z}]\left( \frac{256(1 - \lambda + \lambda^2)^3}{\lambda^2(\lambda - 1)^2} \right) \equiv L_{a_k,1}(\lambda) \cdot
		G_p(\lambda) \mod p. \] 
		This finishes the proof of the lemma.
	\end{proof}

	The goal is to show that the polynomial $G_p$ splits over $\mathbb{F}_p$; by Lemma \ref{lem25} this will imply that $P[\mathscr{C}_k \theta_{\z}](j)$ splits over $\mathbb{F}_p$.
	\begin{lem}
		\label{REC}
		The polynomial $G_p(\lambda)$ is reciprocal modulo $p$. That is,
		\[ \lambda^{\frac{p+1}{4}} G_p(1/\lambda)\equiv G_p(\lambda) \mod p. \]
	\end{lem}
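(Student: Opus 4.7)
The plan is to show coefficient-by-coefficient that $c_{N-m} \equiv c_m \pmod{p}$, where $c_m := (-1/4)_m (1/4)_m / ((1/2)_m \, m!)$ is the coefficient of $\lambda^m$ in $G_p$ and $N := (p+1)/4$. The entire argument rests on the single Pochhammer reflection identity
\[ (x)_n \;=\; (-1)^n \, (1 - x - n)_n, \]
obtained by reversing the product $\prod_{k=0}^{n-1}(x+k)$, together with the crucial observation that $4N = p+1$, so that $N \equiv 1/4 \pmod{p}$.

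First I would apply the reflection identity with $n = N-m$ to each of the four Pochhammer-type factors appearing in $c_{N-m}$. The shifted parameters $1-x-(N-m)$ reduce modulo $p$ to clean quantities depending only on $m$: for $x \in \{-1/4,\,1/4,\,1/2,\,1\}$ they become respectively $m+1$, $m+1/2$, $m+1/4$, $m-1/4$ modulo $p$. Each resulting Pochhammer symbol has the form $(y+m)_{N-m}$ and telescopes into a ratio $(y)_N/(y)_m$ via $(y)_N = (y)_m (y+m)_{N-m}$. Concretely, one arrives at the four congruences
\[ (-1/4)_{N-m} \equiv (-1)^{N-m} \tfrac{N!}{m!}, \qquad (1/4)_{N-m} \equiv (-1)^{N-m} \tfrac{(1/2)_N}{(1/2)_m}, \]
\[ (1/2)_{N-m} \equiv (-1)^{N-m} \tfrac{(1/4)_N}{(1/4)_m}, \qquad (N-m)! \equiv (-1)^{N-m} \tfrac{(-1/4)_N}{(-1/4)_m} \pmod{p}. \]
Substituting these four expressions into $c_{N-m}$, the four sign factors cancel in pairs and the telescoped ratios rearrange to give
\[ c_{N-m} \;\equiv\; \frac{c_m}{c_N} \pmod{p}. \]

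It then suffices to verify $c_N \equiv 1 \pmod{p}$. This is essentially the $m=0$ case of the previous computation: the reflection identity immediately yields $(-1/4)_N \equiv (-1)^N N! \pmod{p}$ and $(1/4)_N \equiv (-1)^N (1/2)_N \pmod{p}$, and substituting these into $c_N = (-1/4)_N (1/4)_N / ((1/2)_N \, N!)$ makes every factor cancel, leaving $c_N \equiv 1 \pmod{p}$. Combined with the previous step this yields exactly the desired reciprocity $c_{N-m} \equiv c_m \pmod{p}$.

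I expect no real conceptual obstacle: the whole proof is driven by the ``accident'' that $N \equiv 1/4 \pmod{p}$ sits at the fixed point of the natural symmetry of the hypergeometric parameter set $\{-1/4,\,1/4,\,1/2,\,1\}$. The only care required is bookkeeping, i.e.\ keeping track of the four $(-1)^{N-m}$ sign factors and the four telescoped Pochhammer ratios so that everything cancels as claimed, together with checking that all Pochhammer symbols involved are indeed $p$-integral and nonzero modulo $p$ (which is automatic for $m \le N < p$).
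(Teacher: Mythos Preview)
Your argument is correct and is essentially the same as the paper's: both rest entirely on the congruence $N \equiv \tfrac14 \pmod p$, which makes the parameter set $\{-\tfrac14,\tfrac14,\tfrac12,1\}$ self-dual under $x \mapsto 1-x-N+m$. The only cosmetic difference is packaging: the paper verifies $c_0 \equiv c_N \equiv 1$ and then inducts via the one-step ratio identity $c_{j+1}/c_j \equiv c_{N-j-1}/c_{N-j}$, whereas you apply the Pochhammer reflection $(x)_n = (-1)^n(1-x-n)_n$ globally and telescope to the closed form $c_{N-m} \equiv c_m/c_N$ before checking $c_N \equiv 1$; unwinding either computation yields the other.
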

	\begin{proof}
		Write $G_p(\lambda) = \sum_{j = 0}^{\frac{p+1}{4}} c_j \lambda^j.$ We need to show 
		\[c_{j} \equiv c_{\frac{p+1}{4} - j} \mod p \qquad \textnormal{ for } 0 \leq j \leq \frac{p+1}{4}. \]
		It is easy to see that $c_0 \equiv 1 \equiv c_{\frac{p+1}{4}} \mod p$. The congruence for the remaining coefficients follows by induction on $j$ and from the congruence
		\[ \frac{c_{j+1}}{c_j} \equiv \frac{(-\frac{1}{4}+j)(\frac{1}{4}+j)}{(\frac{1}{2}+j)(1+j)} \equiv \frac{(\frac{1}{2}+\frac{p+1}{4}-j-1)(\frac{p+1}{4}-j)}{(-\frac{1}{4}+\frac{p+1}{4}-j-1)(\frac{1}{4}+\frac{p+1}{4}-j-1)} \equiv \frac{c_{\frac{p+1}{4}-j-1}}{c_{\frac{p+1}{4}-j}} \mod p. \qedhere\]
	\end{proof}
	
	\begin{lem}
		\label{lem27}
		For a prime $p$ congruent to $3 \mod 4$, the polynomial $G_p$ factors as
		\begin{equation}
			\label{DezeMan}
			G_p(\lambda) \equiv \prod_{\substack{t - 1 \in \mathbb{F}_p^{*2}\\\ t \not \in \mathbb{F}_p^{*2}}} (\lambda - t) \mod p.
		\end{equation}
	\end{lem}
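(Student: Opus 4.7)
My plan starts by observing that both sides of the claimed congruence are monic polynomials in $\lambda$ of degree $(p+1)/4$ over $\mathbb{F}_p$. For $G_p$ this follows from Lemma \ref{REC}, which forces the leading coefficient to equal the constant term~$1$. For the right-hand side, using the standard character-sum identity $\sum_{t\in\mathbb{F}_p}\chi(t(t-1))=-1$ (with $\chi$ the quadratic character modulo $p$) together with $\chi(-1)=-1$ for $p\equiv 3\mod 4$, one obtains
\[
\#\bigl\{t\in\mathbb{F}_p\setminus\{0,1\}:\chi(t)=-1,\ \chi(t-1)=1\bigr\}=\frac{p+1}{4}.
\]
Since both polynomials are monic of the same degree, it suffices to show that every such $t$ is a root of $G_p$ modulo $p$.

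The main step is to establish the closed form
\[
G_p\bigl(4z(1-z)\bigr)\equiv (1-z)^{(p+1)/2}+z^{(p+1)/2}\mod p
\]
in $\mathbb{F}_p[z]$. To prove this I would apply the quadratic hypergeometric transformation ${}_{2}F_{1}(-\tfrac14,\tfrac14;\tfrac12;4z(1-z))=(1-z)^{1/2}$ and combine with the truncation $G_p\equiv {}_2F_1(-\tfrac14,\tfrac14;\tfrac12;\lambda)\mod \lambda^{(p+5)/4}$ to get $G_p(4z(1-z))\equiv(1-z)^{1/2}\mod z^{(p+5)/4}$ as formal power series over $\mathbb{F}_p$. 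The binomial congruence $\binom{1/2}{m}\equiv\binom{(p+1)/2}{m}\mod p$ for $0\le m\le(p+1)/2$ then yields $(1-z)^{1/2}\equiv(1-z)^{(p+1)/2}\mod z^{(p+3)/2}$, so $G_p(4z(1-z))\equiv(1-z)^{(p+1)/2}\mod z^{(p+5)/4}$. By the invariance of $4z(1-z)$ under $z\leftrightarrow 1-z$, there is also the symmetric congruence $G_p(4z(1-z))\equiv z^{(p+1)/2}\mod (1-z)^{(p+5)/4}$. Since $z^{(p+5)/4}$ and $(1-z)^{(p+5)/4}$ are coprime in $\mathbb{F}_p[z]$ with product of degree $(p+5)/2>(p+1)/2=\deg G_p(4z(1-z))$, a Chinese Remainder Theorem uniqueness argument pins $G_p(4z(1-z))$ down to the polynomial $(1-z)^{(p+1)/2}+z^{(p+1)/2}$, which manifestly realizes both residues.

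Given the closed form, I would evaluate $G_p(t)$ at $t\in\mathbb{F}_p\setminus\{0,1\}$ by solving $4z(1-z)=t$, yielding $z=(1\pm\sqrt{1-t})/2$. When $\chi(1-t)=1$ we have $z\in\mathbb{F}_p$ and $G_p(t)=z\chi(z)+(1-z)\chi(1-z)$; a short case analysis on the signs rules out vanishing (it would force $z=1/2$, but then $\chi(z)=\chi(1-z)$, contradicting the required sign difference). When $\chi(1-t)=-1$, the conjugate roots $z,\bar z\in\mathbb{F}_{p^2}\setminus\mathbb{F}_p$ satisfy $1-z=\bar z$, and so $G_p(t)=z^{(p+1)/2}+\bar z^{(p+1)/2}=\mathrm{Tr}_{\mathbb{F}_{p^2}/\mathbb{F}_p}(A)$ with $A:=z^{(p+1)/2}$. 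Since $A^2=N(z)=t/4$, in $\mathbb{F}_{p^2}=\mathbb{F}_p(i)$ (with $i^2=-1$) one has either $A\in\mathbb{F}_p^{*}$ (when $\chi(t)=1$, giving $\mathrm{Tr}(A)=2A\ne 0$) or $A\in i\cdot\mathbb{F}_p^{*}$ (when $\chi(t)=-1$, giving $\mathrm{Tr}(A)=0$). Hence $G_p(t)\equiv 0\mod p$ precisely when $\chi(1-t)=-1$ and $\chi(t)=-1$, which for $p\equiv 3\mod 4$ is equivalent to $\chi(t-1)=1$ and $\chi(t)=-1$---exactly the claimed root set.

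The hardest part will be the Chinese Remainder uniqueness in the middle step; the remaining ingredients reduce to standard hypergeometric identities, binomial congruences, and Frobenius analysis in $\mathbb{F}_{p^2}$.
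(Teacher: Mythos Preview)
Your argument is correct and follows a genuinely different route from the paper's. The paper establishes the lemma by comparing \emph{power sums}: writing $R_p(\lambda)$ for the right-hand product, it evaluates $S_v(R_p)\equiv\tfrac14\,(\tfrac12)_v/v!\bmod p$ via Euler's criterion and the standard sum $\sum_{a}a^r$, and then shows that $S_v(G_p)$ obeys the same formula by combining the reciprocity of Lemma~\ref{REC} (to rewrite Newton's identities in terms of the low-index coefficients $c_j$) with the convolution identity
\[
\sum_{j=0}^{v}\frac{(-\tfrac14)_j(\tfrac14)_j}{(\tfrac12)_j\,j!}\cdot\frac{(\tfrac12)_{v-j}}{(v-j)!}
=\frac{(\tfrac12)_{2v}}{(2v)!}
=(1-4v)\,\frac{(-\tfrac14)_v(\tfrac14)_v}{(\tfrac12)_v\,v!}.
\]
Matching power sums for $0\le v\le (p+1)/4$ together with equal leading coefficients then forces $G_p\equiv R_p$.

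Your approach instead produces the closed form $G_p(4z(1-z))\equiv z^{(p+1)/2}+(1-z)^{(p+1)/2}\bmod p$ from Gauss's quadratic transformation, the binomial congruence $\binom{1/2}{m}\equiv\binom{(p+1)/2}{m}$, and a CRT/degree argument in $\mathbb{F}_p[z]$, and then determines the roots directly by a Frobenius computation in $\mathbb{F}_{p^2}$. This buys you more than the power-sum method: you obtain an explicit polynomial identity for $G_p$ under the substitution $\lambda=4z(1-z)$, and the character conditions on $t$ and $t-1$ emerge transparently from the norm $N(z)=t/4$ and the splitting behaviour of $1-t$. The paper's method, by contrast, never leaves $\mathbb{F}_p$ and uses only symmetric-function bookkeeping; it is shorter but less illuminating about \emph{why} these particular $t$ are the roots. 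Both arguments rely on Lemma~\ref{REC} to identify the leading coefficient of $G_p$ as~$1$.
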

	\begin{proof}
		For a polynomial $P(x) \in \mathbb{F}_p[x]$ of degree $d>0$ and $v \geq 0$, define the $v$-th \emph{power sums} of $P$ as
		\[S_v(P) = \sum_{\alpha \colon P(\alpha) = 0} \alpha^v. \]
		Let $R_p(\lambda)$ be the polynomial on the right-hand side of \eqref{DezeMan}.
		The power sums can be written in terms of Legendre symbols as 
		\begin{align*}
			S_v(R_p) &\equiv \frac{1}{4} \sum_{a = 1}^{p-1} \left(1 - \left( \frac{a}{p} \right) \right) \left(1 + \left(\frac{a-1}{p} \right)\right) a^v \mod p\\
			&\equiv \frac{1}{4} \sum_{a = 1}^{p-1} \left(1 - a^{\frac{p-1}{2}} + (a-1)^{\frac{p-1}{2}} - a^{\frac{p-1}{2}} (a-1)^{\frac{p-1}{2}}  \right)a^v,
		\end{align*}
	where Euler's criterion is used to obtain the expression in the last line.
		Since
		\[    \sum_{a = 1}^{p-1} a^r \mod p \equiv \begin{cases}
			-1 &\,  \textnormal{ if } \, p-1 \text{ divides }r,\\
			\phantom{+}0  & \, \textnormal{ otherwise},
		\end{cases}
		\]
		it follows that 
		\[S_v(R_p) \equiv \frac{(-1)^v}{4} \binom{\frac{p-1}{2}}{v} \equiv \frac{1}{4} \frac{(\frac{1}{2})_v}{v!} \mod p. \]
		Using Lemma \ref{REC} and Newton's identities, we relate the coefficients of $G_p$ to the power sums of $G_p$ as follwos:
		\[S_v(G_p) \equiv -\sum_{j = 1}^{v-1} c_j S_{v-j}(G_p) - v c_v \mod p\]
	in the notation from the proof of Lemma \ref{REC}.
		Using induction on $v$ and the identity
		\[\sum_{j = 0}^v \frac{(-\frac{1}{4})_j (\frac{1}{4})_j}{(\frac{1}{2})_j j!} \frac{(\frac{1}{2})_{v-j}}{(v-j)!} = \frac{(\frac{1}{2})_{2v}}{(2v)!} = (1-4v) \frac{(-\frac{1}{4})_v (\frac{1}{4})_v}{(\frac{1}{2})_v v!}, \]
		we see that
		\[S_v(G_p) \equiv \frac{1}{4} \frac{(\frac{1}{2})_v}{v!} \equiv S_v(R_p) \mod p \]
		for all integers $0 \leq v \leq \frac{p+1}{4}$. As both polynomials have the same leading coefficient in $\mathbb{F}_p$, we conclude that $G_p(\lambda) \equiv R_p(\lambda) \mod p$.
	\end{proof}
	\begin{proof}[Proof of Theorem \ref{MainTHMB}.]
	It follows from Lemmas \ref{lem25} and \ref{lem27} that the zero set of the polynomials $P[\mathscr{C}_k \theta_{\z}](j) \mod p$, where $k = \frac{p+1}{2}$, is \[\left  \{ \frac{256(1 - \lambda + \lambda^2)^3}{\lambda^2 (\lambda - 1)^2} \, \Bigm |  -\lambda, \lambda - 1 \in \mathbb{F}_p^{*2} \right \} \setminus \{0, 1728 \}.\]
		Therefore, $P[\mathscr{C}_k \theta_{\z}](j)$ splits over $\mathbb{F}_p$.
	\end{proof}
	\subsection{Elliptic curves with a prescribed rational $4$-torsion group}
	The goal of this section is to classify all elliptic curves over $\mathbb{F}_p$ for $p \equiv 3 \mod 4$ with a prescribed rational $4$-torsion group. We will relate this to the zeros of $P[\mathscr{C}_k \theta_{\z}](j) \mod p$, where $k = \frac{p+1}{2}$.
	Every $\mathcal{E}/\mathbb{F}_p$ with rational $2$-torsion and $p \equiv 3 \mod 4$ is isomorphic (over $\mathbb{F}_p$) to a \emph{Legendre elliptic curve} $\mathcal{E}_{\lambda}/\mathbb{F}_p$ given by the equation
	\begin{equation*}
		\mathcal{E}_{\lambda}: Y^2 = X(X-1)(X-\lambda),
	\end{equation*}
	where $\lambda \in \mathbb{F}_p \setminus \{0, 1\}$. This follows from the explicit $\mathbb{F}_p$-isomorphism given in \cite[Proposition 1.7a]{Silverman}, see also \cite{TOP}.
	For these elliptic curves, we can classify all possible $\mathbb{F}_p$-rational 4-torsion groups. The next lemma is comparable with \cite[Proposition 2.1]{TOP}.
	\begin{lem}
		\label{Char}
		For $\lambda \in \mathbb{F}_p \setminus \{0,1\}$ and $p \equiv 3 \mod 4$,
		$$\mathcal{E}_{\lambda}(\mathbb{F}_p)[4] \cong \begin{cases} \mathbb{Z}/2\mathbb{Z} \times \mathbb{Z}/2\mathbb{Z} &\textnormal{ if } -\lambda, \lambda - 1  \in \mathbb{F}_p^{*2}, \\  
			\mathbb{Z}/2\mathbb{Z} \times \mathbb{Z}/4\mathbb{Z} &\textnormal{ otherwise.} \end{cases}$$
	\end{lem}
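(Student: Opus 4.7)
The plan is to classify $\mathcal{E}_\lambda(\mathbb{F}_p)[4]$ as a subgroup of $(\mathbb{Z}/4\mathbb{Z})^2$ containing $(\mathbb{Z}/2\mathbb{Z})^2$, then pin down which of the three allowed isomorphism types occurs by deciding, for each nonzero $2$-torsion point, whether it lies in $2\mathcal{E}_\lambda(\mathbb{F}_p)$. First I would observe that the three affine $2$-torsion points $(0,0), (1,0), (\lambda,0)$ are manifestly $\mathbb{F}_p$-rational, so $\mathcal{E}_\lambda(\mathbb{F}_p)[2] \cong \mathbb{Z}/2\mathbb{Z} \times \mathbb{Z}/2\mathbb{Z}$, and therefore $\mathcal{E}_\lambda(\mathbb{F}_p)[4]$ is one of $(\mathbb{Z}/2\mathbb{Z})^2$, $\mathbb{Z}/2\mathbb{Z} \times \mathbb{Z}/4\mathbb{Z}$, or $(\mathbb{Z}/4\mathbb{Z})^2$. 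The last case is excluded by the Weil pairing: if $\mathcal{E}_\lambda[4] \subset \mathcal{E}_\lambda(\mathbb{F}_p)$, then $\mu_4 \subset \mathbb{F}_p$, contradicting $p \equiv 3 \bmod 4$.

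Next I would use the fact that $\mathcal{E}_\lambda(\mathbb{F}_p)[4] \supsetneq \mathcal{E}_\lambda(\mathbb{F}_p)[2]$ exactly when some nonzero $2$-torsion point is the double of an $\mathbb{F}_p$-rational point. The standard $2$-descent criterion says that for $\mathcal{E}: Y^2 = (X-e_1)(X-e_2)(X-e_3)$ over a field $K$ of characteristic not $2$, the point $(e_i,0)$ lies in $2\mathcal{E}(K)$ if and only if both $e_i - e_j$ and $e_i - e_k$ are squares in $K$ (for $\{j,k\} = \{1,2,3\} \setminus \{i\}$); I would either cite this or give it a short direct proof via the doubling formula (the tangent through $Q = (x,y)$ passes through $(e_i,0)$ exactly when $x - e_j$ and $x - e_k$ are both squares, whose product equals $y^2/(x-e_i)$). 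Specializing to $(e_1,e_2,e_3) = (0,1,\lambda)$ gives the three conditions
\begin{align*}
(0,0) \in 2\mathcal{E}_\lambda(\mathbb{F}_p) &\iff -1,\, -\lambda \in \mathbb{F}_p^{*2},\\
(1,0) \in 2\mathcal{E}_\lambda(\mathbb{F}_p) &\iff 1 - \lambda \in \mathbb{F}_p^{*2},\\
(\lambda,0) \in 2\mathcal{E}_\lambda(\mathbb{F}_p) &\iff \lambda,\, \lambda - 1 \in \mathbb{F}_p^{*2}.
\end{align*}

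Finally I would exploit $p \equiv 3 \bmod 4$, which forces $-1 \notin \mathbb{F}_p^{*2}$, so $a \in \mathbb{F}_p^{*2} \iff -a \notin \mathbb{F}_p^{*2}$ for every $a \in \mathbb{F}_p^*$. The first condition is then vacuously false, and the remaining two translate to: neither $(1,0)$ nor $(\lambda,0)$ is in $2\mathcal{E}_\lambda(\mathbb{F}_p)$ if and only if $\lambda - 1 \in \mathbb{F}_p^{*2}$ (ruling out $(1,0)$) and, in addition, $\lambda \notin \mathbb{F}_p^{*2}$, i.e.\ $-\lambda \in \mathbb{F}_p^{*2}$ (ruling out $(\lambda,0)$). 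Hence $\mathcal{E}_\lambda(\mathbb{F}_p)[4] \cong (\mathbb{Z}/2\mathbb{Z})^2$ exactly when $-\lambda, \lambda-1 \in \mathbb{F}_p^{*2}$, and $\mathcal{E}_\lambda(\mathbb{F}_p)[4] \cong \mathbb{Z}/2\mathbb{Z} \times \mathbb{Z}/4\mathbb{Z}$ otherwise. The main technical hurdle is making the $2$-descent divisibility criterion rigorous; if one prefers to avoid descent machinery, the same criterion can be obtained from a direct but slightly longer manipulation of the explicit doubling formula, which I would sketch as a fallback.
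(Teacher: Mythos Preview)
Your argument is correct and takes a genuinely different route from the paper. The paper proceeds by explicitly factoring the $4$-division polynomial as
\[
\psi_4(X,Y)\;=\;4Y\,(X^2-\lambda)(X^2-2X+\lambda)(X^2-2\lambda X+\lambda)
\]
and then reading off, case by case, when one of the three quadratic factors has a root in $\mathbb{F}_p$ whose corresponding $Y$-coordinate is also rational; the case $(\mathbb{Z}/4\mathbb{Z})^2$ is never singled out, it simply does not occur in the case analysis. You instead eliminate $(\mathbb{Z}/4\mathbb{Z})^2$ in one stroke via the Weil pairing, and then use the classical $2$-descent divisibility criterion $(e_i,0)\in 2\mathcal{E}(K)\iff e_i-e_j,\,e_i-e_k\in K^{*2}$ to decide which (if any) nonzero $2$-torsion point is halved. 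The two arguments meet in the middle: each of the paper's three quadratics is exactly the halving locus of one of the points $(0,0),(1,0),(\lambda,0)$, and the square conditions you write down are the combined ``rational $x$-coordinate plus rational $y$-coordinate'' conditions the paper checks by hand. Your packaging is cleaner and more conceptual; the paper's is more self-contained (no Weil pairing or descent lemma needs to be cited, only the definition of $\psi_4$).

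One small remark: the parenthetical sketch of the halving criterion (``the tangent through $Q=(x,y)$ passes through $(e_i,0)$ exactly when $x-e_j$ and $x-e_k$ are both squares'') is not quite a proof as stated---the observation $(x-e_j)(x-e_k)=y^2/(x-e_i)$ only shows their product is a square, not each factor separately, and one still has to locate an $x\in K$ with this property. Since the criterion is standard (e.g.\ Knapp, \emph{Elliptic Curves}, Thm.~4.2), simply citing it is the cleanest option; if you want to prove it inline, a sharper route is to compute the $x$-coordinates of the four halves of $(e_i,0)$ as $e_i\pm\sqrt{e_i-e_j}\,\sqrt{e_i-e_k}$ and check directly that these and the corresponding $y$-values lie in $K$ precisely when both $e_i-e_j$ and $e_i-e_k$ are squares.
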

	\begin{proof}
		For the proof consider the division polynomial $\psi_4(X,Y) \in \mathbb{F}_p[X,Y]$, see \cite[p. 105]{Silverman}, of the elliptic curve $\mathcal{E}_\lambda: Y^2 = X(X-1)(X-\lambda)$. The zeros of this polynomial correspond precisely to points in $\mathcal{E}_{\lambda}(\overline{\mathbb{F}}_p)[4]$. A computation shows
		\begin{align*}
			\psi_4(X,Y) &\equiv 2Y (2X^6 - 4(1 + \lambda)X^5 + 10\lambda X^4 - 10\lambda^2X^2 + 4(1 + \lambda)\lambda^2X - 2\lambda^3) \mod p\\
			&\equiv 4Y(X^2 - \lambda)(X^2 - 2X + \lambda)(X^2 - 2\lambda X + \lambda).
		\end{align*}
		If $-\lambda, \lambda - 1 \in \mathbb{F}_p^{*2}$, we see that $\psi_4(X,Y)$ has no zeros in $\mathbb{F}_p$ apart from $Y = 0$. The remaining case uses a similar computation. For example, if  $\lambda, \lambda - 1 \in \mathbb{F}_p^{*2}$, we see that $$\mathcal{E}_\lambda(\mathbb{F}_p)[4] = \mathcal{E}_\lambda(\mathbb{F}_p)[2] \cup \langle (\lambda \pm \sqrt{\lambda(\lambda - 1)}, \lambda\sqrt{\lambda - 1} \pm \sqrt{\lambda}(\lambda - 1)) \rangle.$$
		Thus, $\mathcal{E}_\lambda(\mathbb{F}_p)[4] \cong \mathbb{Z}/2\mathbb{Z} \times \mathbb{Z}/4 \mathbb{Z}$ in this case.
	\end{proof}
	A combination of Lemmas \ref{Char} and \ref{lem27} gives the following result.
	\begin{lem}
		\label{lem28}
		For primes $p \equiv 3 \mod 4$, we have the congruence
		\[G_p(x) \equiv \prod_{\substack{\lambda \in \mathbb{F}_p \setminus \{0,1\}\\ \mathcal{E}_\lambda(\mathbb{F}_p)[4] \cong \mathbb{Z}/2\mathbb{Z} \times \mathbb{Z}/2\mathbb{Z}}} (x - \lambda) \mod p.\]
	\end{lem}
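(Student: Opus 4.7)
The plan is to simply compare the index set in Lemma \ref{lem27} with the characterization of Lemma \ref{Char}. By Lemma \ref{lem27}, we have
\[ G_p(x) \equiv \prod_{\substack{t-1 \in \mathbb{F}_p^{*2} \\ t \notin \mathbb{F}_p^{*2}}} (x - t) \mod p, \]
so it suffices to show that the conditions $\{t-1 \in \mathbb{F}_p^{*2},\ t \notin \mathbb{F}_p^{*2}\}$ on $t \in \mathbb{F}_p \setminus \{0,1\}$ are equivalent to $\mathcal{E}_t(\mathbb{F}_p)[4] \cong \mathbb{Z}/2\mathbb{Z} \times \mathbb{Z}/2\mathbb{Z}$.

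The key input is that $p \equiv 3 \mod 4$, which implies that $-1 \notin \mathbb{F}_p^{*2}$. Consequently, for any $t \in \mathbb{F}_p^*$, the multiplicativity of the Legendre symbol gives $-t \in \mathbb{F}_p^{*2}$ if and only if $t \notin \mathbb{F}_p^{*2}$. Therefore the condition $t \notin \mathbb{F}_p^{*2}$ in Lemma \ref{lem27} is the same as $-t \in \mathbb{F}_p^{*2}$, and the index set becomes
\[ \bigl\{ t \in \mathbb{F}_p \setminus \{0,1\} \ \bigm| \ -t,\ t-1 \in \mathbb{F}_p^{*2}\bigr\}. \]
By Lemma \ref{Char}, this is precisely the set of $\lambda \in \mathbb{F}_p \setminus \{0,1\}$ for which $\mathcal{E}_\lambda(\mathbb{F}_p)[4] \cong \mathbb{Z}/2\mathbb{Z} \times \mathbb{Z}/2\mathbb{Z}$, because the alternative in Lemma \ref{Char} gives $\mathbb{Z}/2\mathbb{Z} \times \mathbb{Z}/4\mathbb{Z}$.

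There is no genuine obstacle here; the lemma is a purely formal combination of the two previous results, and the only subtle point is the quadratic-reciprocity style observation that the non-squareness of $-1$ mod $p$ lets us flip the sign in the condition coming from Lemma \ref{lem27} to match the condition appearing in Lemma \ref{Char}. Both sides are monic polynomials of the same degree with the same roots in $\mathbb{F}_p$, so the congruence follows.
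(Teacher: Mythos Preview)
Your proof is correct and follows exactly the approach the paper indicates: the paper simply says this lemma is ``a combination of Lemmas \ref{Char} and \ref{lem27}'' without writing out the details. You have supplied precisely the missing step, namely that for $p\equiv 3\pmod 4$ the condition $t\notin\mathbb{F}_p^{*2}$ is equivalent to $-t\in\mathbb{F}_p^{*2}$, which matches the index set in Lemma \ref{lem27} to that in Lemma \ref{Char}.
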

	\begin{proof}[Proof of Theorem \ref{MainTHMC}.] The zero set of $P[\mathscr{C}_k \theta_{\z}](j)$, where $k = \frac{p+1}{2}$, is precisely
		\[\left \{ \frac{256(1 - \lambda + \lambda^2)^3}{\lambda^2 (\lambda - 1)^2} \, \Bigm | \,  G_p(\lambda) \equiv 0 \mod p \right \} \setminus \{0, 1728 \}.\] As a consequence of Lemma \ref{lem28}, these are exactly the $j$-invariants, different from $0,1728$\,, of the elliptic curves over $\mathbb{F}_p$ with $\mathcal{E}(\mathbb{F}_p)[2] = \mathcal{E}(\mathbb{F}_p)[4] \cong \mathbb{Z}/2\mathbb{Z} \times \mathbb{Z}/2\mathbb{Z}$.
	\end{proof}
	
	\subsection{Proofs for $\mathscr{C}_k \theta_H(\tau)$}
		We start with a hypergeometric identity for the function $\theta_{H}(\tau)$.
	\begin{lem}
		In a neighborhood of $\tau = \ii \infty$, we have the identity
		\label{lem21}
		\begin{equation}
			\label{DEGE}
			\theta_{H}(\tau) = E_4(\tau)^{1/4} \, {}_2 F_{1} \left(-\frac{1}{12},\frac{1}{4};\frac{2}{3};\frac{1728}{j(\tau)}\right).
		\end{equation} 
	\end{lem}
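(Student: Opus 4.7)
The plan is to mimic the proof of Lemma \ref{lem22} verbatim. Since $\theta_H \in M_1(\Gamma_1(3))$, \cite[Proposition 21]{Zagbook} implies that $\theta_H$, regarded as a function of $z := 1728/j(\tau)$ in a neighborhood of the cusp $\tau = \ii \infty$, satisfies a second-order linear differential equation with rational coefficients in $z$. The right-hand side of \eqref{DEGE}, being $E_4^{1/4}$ times a hypergeometric function, likewise satisfies a second-order linear ODE in $z$: combining the Gauss hypergeometric equation for ${}_2F_1(-1/12, 1/4; 2/3; z)$ with the identity $E_4^{1/4} = {}_2F_1(1/12, 5/12; 1; z)$ from \eqref{E4HG} yields an explicit ODE for the product.

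The core step is to check that these two Fuchsian ODEs coincide. Each has regular singularities only at $z = 0, 1, \infty$ (corresponding to the cusp and the two elliptic points), so it suffices to compare local exponents. On the right the exponents at $z = 0$ are $\{0, 1/3\}$ (read off from the hypergeometric parameter difference $1 - 2/3$, noting that $E_4^{1/4}$ is holomorphic and non-vanishing at $z = 0$), and analogous computations using the parameters $-1/12, 1/4$ pin down the exponents at $z = 1$ and $z = \infty$; these match the exponents for $\theta_H$, which can be read off from the $q$-expansions of $\theta_H$ and of an independent weight-$1$ companion on $\Gamma_1(3)$ at the cusp, at $\tau = \ii$, and at $\tau = \rho$. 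Matching of initial values is immediate: at $z = 0$ one has $\theta_H(\ii \infty) = 1$, $E_4(\ii \infty)^{1/4} = 1$, and ${}_2F_1(-1/12, 1/4; 2/3; 0) = 1$. Because the indicial exponents at $z = 0$ are $\{0, 1/3\}$, the space of holomorphic-at-$0$ solutions is one-dimensional, so agreement of constant terms forces the two sides to coincide.

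The main obstacle is the ODE-matching (equivalently, the exponent-matching) step. In practice the quickest verification is computational: having established that both sides satisfy a second-order Fuchsian ODE in $z$ with singularities only at $z = 0, 1, \infty$, one expands both as $q$-series and checks agreement of sufficiently many Fourier coefficients to force equality of the two solutions in the one-dimensional space of holomorphic-at-$z=0$ solutions.
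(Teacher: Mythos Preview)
Your proposal is correct and follows essentially the same approach as the paper's own proof: invoke \cite[Proposition 21]{Zagbook} to get a second-order ODE in $1728/j$ for $\theta_H$, observe that the right-hand side satisfies such an ODE via \eqref{E4HG} and the hypergeometric equation, and then match the two by checking they satisfy the same equation with the same initial value. The paper's proof is in fact even terser than yours---it simply says the argument ``follows the lines of the proof of Lemma~\ref{lem22}'' without spelling out the exponent comparison or the computational check you describe.
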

	
	\begin{proof}
		Since $\theta_{H}(\tau)$ is a modular form of weight $1$ for the congruence subgroup $\Gamma_1(3)$, by \cite[Proposition 21]{Zagbook}, it satisfies a second order linear differential equation as a function in $1728/j$. The proof now follows the lines of the proof of Lemma \ref{lem22}.
	\end{proof}
	
	\begin{proof}[Proof of Theorem \ref{HexP}]
		This proof is similar to the proof of Theorem \ref{MainTHMA}.
		For weights $k \equiv 0, 6 \mod 12$ consider the modular form
		\begin{align*}
			g_k(\tau) = V^{b_k}_{n_k}(j(\tau)) \Delta(\tau)^{n_k} E_6(\tau)^{b_k}
		\end{align*}
	of weight $k$.
		If $k \equiv 0 \mod 12$, i.e. $b_k = 0$, then
		\begin{align*}
			g_k(\tau) = V^0_{n_k}(j(\tau)) \Delta(\tau)^{n_k} &=  \left(j^{n_k} \cdot {}_{2} F_{1}\left(-\frac{1}{12},\frac{1}{4};\frac{2}{3};\frac{1728}{j}\right) + \mathcal{O}(1/j)\right)\Delta^{n_k}\\
			&= \theta_{H} E_4^{3n_k - 1/4} + \mathcal{O}(q^{n_k + 1})
		\end{align*}
		by Lemma \ref{lem21}. As
		\[ E_4^{3n_k - 1/4} = E_4^{p/4} \equiv 1 + \mathcal{O}(q^{p}) \mod p,\]
		it follows that $g_k \equiv \mathscr{C}_k \theta_H + \mathcal{O}(q^{n_k + 1}) \mod p$ and, therefore, $g_k \equiv \mathscr{C}_k \theta_H \mod p$ by Lemma \ref{almostzeroiszero}.

		For the case $k \equiv 6 \mod 12$, i.e. $b_k = 1$, make use of Euler's transformation
		\begin{align}
			{}_{2} F_{1}\left(\frac{5}{12},\frac{3}{4};\frac{2}{3};\frac{1728}{j}\right) &= 
			\left(1 - \frac{1728}{j} \right)^{-1/2} \, {}_{2} F_{1}\left(-\frac{1}{12},\frac{1}{4};\frac{2}{3};\frac{1728}{j}\right)\label{Euler}\\
			&= E_4^{3/2} E_6^{-1} \, {}_{2} F_{1}\left(-\frac{1}{12},\frac{1}{4};\frac{2}{3};\frac{1728}{j}\right). \nonumber
		\end{align}
		Similar to the case $k \equiv 0 \mod 12$ we find out that
		\begin{align*}
			g_k = \theta_{H} E_4^{3n_k + 5/4}+ \mathcal{O}(q^{n_k + 1}).
		\end{align*}
		As
		\begin{align*}
			E_4^{3n_k + 5/4} = E_4^{p/4} \equiv 1 + \mathcal{O}(q^{p}) \mod p,
		\end{align*}
	using Lemma \ref{almostzeroiszero} 	we conclude that $g_k(\tau) \equiv \mathscr{C}_k \theta_H(\tau) \mod p$.
		Finally, this shows that $V^{b_k}_{n_k}(j) \equiv P[\mathscr{C}_k \theta_H](j) \mod p$.
	\end{proof}

As in the case of $\mathscr{C}_k \theta_{\z}(\tau)$, we next choose a Hauptmodul for the group $\Gamma_1(3)$. Here we pick
\[t_3(\tau) = -108 \frac{\left(\frac{\eta(3 \tau)}{\eta(\tau)}\right)^{12}}{1 + 27 \left(\frac{\eta(3 \tau)}{\eta(\tau)}\right)^{12}} = -108 q + 1620 q^2 - 18468 q^3 + 181332 q^4 - 1625832 q^5 + \cdots.\]
Indeed, we can express the $j$-invariant as a rational function in $t_3(\tau)$:
\[j(\tau) = \frac{3^3 4^4 (2 t_3(\tau) - 1)^3}{t_3(\tau)(t_3(\tau)+4)^3}, \]
see \cite[Theorem 4.32]{Cooper}. This transformation is used in Lemma \ref{lem213} below.
We first establish analogues of Lemmas \ref{lem13} and \ref{lem14}.
	
	\begin{lem}
		\label{lem23}
		For $n \in \mathbb{Z}_{\geq 0}$ let $p$ be a prime $p = 12n - 1$.  For $m \geq 0$ denote by $c_m \in \mathbb{Q}$ the $m$-th coefficient in the expansion of ${}_2 F_{1}\left(-\frac{1}{12},\frac{1}{4};\frac{2}{3};x\right)$. Then $c_m \equiv 0 \mod p$ for $n < m < 4n$. 
	\end{lem}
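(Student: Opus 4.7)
The proof follows the same template as Lemma \ref{lem13}: I analyze the $p$-adic valuation of each Pochhammer factor appearing in
\[ c_m \= \frac{(-\tfrac{1}{12})_m\,(\tfrac{1}{4})_m}{(\tfrac{2}{3})_m\,m!}, \]
and show that for $p = 12n-1$ the numerator acquires a factor of $p$ while the denominator stays $p$-integral in the range $n < m < 4n$.

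Concretely, I would clear denominators by writing $(-\tfrac{1}{12})_m = 12^{-m}\prod_{j=0}^{m-1}(12j-1)$, and similarly for the other symbols. Since $12 \not\equiv 0, 4 \not\equiv 0, 3 \not\equiv 0 \mod p$ for $p = 12n-1 \geq 11$, the constant denominators $12^m$, $4^m$, $3^m$ do not contribute to $\nu_p$. One then solves linear congruences using $12n \equiv 1 \mod p$:
\begin{itemize}
\item $12j - 1 \equiv 0 \mod p$ iff $j \equiv n \mod p$, so the smallest $j \geq 0$ with this property is $j = n$. Hence $\nu_p((-\tfrac{1}{12})_m) \geq 1$ precisely when $m \geq n+1$.
\item $4j + 1 \equiv 0 \mod p$ iff $j \equiv -3n \equiv 9n-1 \mod p$, so $\nu_p((\tfrac{1}{4})_m) \geq 1$ only when $m \geq 9n$.
\item $3j + 2 \equiv 0 \mod p$ iff $j \equiv -8n \equiv 4n-1 \mod p$, so $\nu_p((\tfrac{2}{3})_m) \geq 1$ only when $m \geq 4n$.
\item $\nu_p(m!) = 0$ whenever $m < p = 12n-1$.
\end{itemize}

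Combining these bounds for $n < m < 4n$, I get $\nu_p((-\tfrac{1}{12})_m) \geq 1$ while $\nu_p((\tfrac{1}{4})_m) = \nu_p((\tfrac{2}{3})_m) = \nu_p(m!) = 0$; hence $\nu_p(c_m) \geq 1$, i.e.\ $c_m \equiv 0 \mod p$. The only ``obstacle'' is the routine arithmetic of solving the three linear congruences modulo $p$, which uses nothing beyond the identity $12n \equiv 1 \mod p$; there is no genuine technical difficulty.
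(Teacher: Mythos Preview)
Your proof is correct and follows exactly the same approach as the paper: both argue by computing the $p$-adic valuation of each Pochhammer factor, finding that $\nu_p((-\tfrac{1}{12})_m)\geq 1$ for $m>n$ while $\nu_p((\tfrac{2}{3})_m)\geq 1$ only for $m\geq 4n$ (and the other factors are $p$-units in the range). Your version simply spells out the linear-congruence arithmetic that the paper leaves implicit.
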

	\begin{proof}
		This follows from considering the $p$-adic valuation of the coefficients.
		We have $\nu _p ( (-\frac{1}{12})_m) \geq 1$ if and only if $m > n$ and $\nu _p ( (
		\frac{2}{3})_m) \geq 1$ if and only if $m \geq 4n$.
	\end{proof}
	
	\begin{lem}
		\label{lem24}
		For $n \in \mathbb{Z}_{\geq 0}$ let $p$ be a prime $p = 12n + 5$.  For $m \geq 0$ denote by $c_m \in \mathbb{Q}$ the $m$-th coefficient in the expansion of ${}_2 F_{1}\left(\frac{5}{12},\frac{3}{4};\frac{2}{3};x\right)$. Then $c_m \equiv 0 \mod p$ for $n < m < 4n + 2$. 
	\end{lem}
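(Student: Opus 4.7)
The plan is to replicate the short $p$-adic valuation argument that was used for Lemma \ref{lem23}. Writing the coefficient explicitly as
\[ c_m = \frac{\left(\tfrac{5}{12}\right)_m \left(\tfrac{3}{4}\right)_m}{\left(\tfrac{2}{3}\right)_m \, m!}, \]
it suffices to show that, for every $m$ with $n < m < 4n+2$, the numerator carries at least one factor of $p$ while the denominator carries none.

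First I would locate the smallest $k \geq 0$ for which each individual Pochhammer symbol acquires a factor of $p = 12n+5$. The general term $\tfrac{5}{12}+k = (5+12k)/12$ of $\left(\tfrac{5}{12}\right)_m$ has its numerator divisible by $p$ exactly when $k = n$, so $\nu_p\left(\left(\tfrac{5}{12}\right)_m\right) \geq 1$ iff $m > n$. For $\left(\tfrac{3}{4}\right)_m$ the congruence $3 + 4k \equiv 0 \mod p$ is solved (since $\gcd(4,p)=1$) by a unique residue class mod $p$; a quick computation using $4(9n+3) = 3p - 3$ shows that the smallest non-negative solution is $k = 9n+3$. Hence $\left(\tfrac{3}{4}\right)_m$ contributes no factor of $p$ throughout the range $m < 9n + 4$.

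A parallel analysis for the denominator gives that the first term of $\left(\tfrac{2}{3}\right)_m$ divisible by $p$ occurs at $k = 4n+1$, so $\nu_p\left(\left(\tfrac{2}{3}\right)_m\right) = 0$ for $m < 4n+2$, and obviously $\nu_p(m!) = 0$ for $m < p$. Combining these four observations, in the range $n < m < 4n+2$ the factor $\left(\tfrac{5}{12}\right)_m$ supplies at least one $p$ to the numerator while none of $\left(\tfrac{3}{4}\right)_m$, $\left(\tfrac{2}{3}\right)_m$ or $m!$ contributes a $p$ to the denominator, forcing $c_m \equiv 0 \mod p$. The argument is essentially bookkeeping; the only mild obstacle is the inversion of $4$ modulo $p$ needed to verify that the first $k$ with $p \mid 3 + 4k$ is $9n+3$, safely outside the interval under consideration.
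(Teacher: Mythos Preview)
Your argument is correct and follows exactly the approach the paper intends: the paper's proof of Lemma~\ref{lem24} simply reads ``Similar to the proof of Lemma~\ref{lem23},'' and your computation of the first index $k$ at which each Pochhammer factor $\tfrac{5}{12}+k$, $\tfrac{3}{4}+k$, $\tfrac{2}{3}+k$, $1+k$ is divisible by $p=12n+5$ is precisely that similar argument carried out in full. The bookkeeping is accurate, including the check that $9n+3$ lies outside the range under consideration.
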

	\begin{proof}
		Similar to the proof of Lemma \ref{lem23}.
	\end{proof}
	\begin{lem}
		\label{lem213}
		Let $k = p + 1$. Then the polynomial $P[\mathscr{C}_k \theta_H](j)$ satisfies the transformation
		\[ (y (y+4)^3)^{n_k}P[\mathscr{C}_k \theta_H]\left(\frac{3^3 4^4 (2y-1)^3}{y(y+4)^3}\right) \equiv
		L_{b_k}(y) \cdot \bigl(y^{(p+1)/3} + 2^{1/3} \bigr)  \mod p, \]
		where $2^{1/3}$ is interpreted as the unique cube root of $2$ in $\mathbb{F}_p$, 
		\begin{align*}
			L_0(y) \equiv 12^{(p+1)/4} \qquad \textnormal{ and } \qquad
			L_1(y) \equiv \frac{12^{(p-5)/4}}{y^2 - 10y - 2}.
		\end{align*}
	\end{lem}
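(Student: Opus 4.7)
The plan is to imitate closely the proof of Lemma \ref{lem25} for the Jacobi theta case. By Theorem \ref{HexP}, $P[\mathscr{C}_k \theta_H](j) \equiv V^{b_k}_{n_k}(j) \pmod p$, so it suffices to prove the stated congruence with $V^{b_k}_{n_k}(j)$ in place of $P[\mathscr{C}_k \theta_H](j)$. I would split into the two cases $b_k = 0$ (equivalently $p \equiv 11 \pmod{12}$) and $b_k = 1$ (equivalently $p \equiv 5 \pmod{12}$), following the case-split used in the proof of Theorem \ref{HexP}.

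First I would apply Lemma \ref{lem23} (when $b_k = 0$) or Lemma \ref{lem24} together with the Euler transformation \eqref{Euler} (when $b_k = 1$) to extend the truncated hypergeometric approximation far beyond degree $n_k$. This yields the congruence
\[ V^{b_k}_{n_k}(j) \equiv j^{n_k} \left(1 - \tfrac{1728}{j}\right)^{-b_k/2} \cdot {}_{2} F_{1}\!\left(-\tfrac{1}{12}, \tfrac{1}{4}; \tfrac{2}{3}; \tfrac{1728}{j}\right) \pmod p , \]
valid up to an error of sufficiently small order in $1/j$. Next I would perform the Hauptmodul substitution $j = 3^3 4^4(2y-1)^3/(y(y+4)^3)$ and use the direct polynomial identity
\[ 1 - \frac{1728}{j} = -\frac{(y^2 - 10y - 2)^2}{4(2y-1)^3}, \]
which pins down the factor $y^2 - 10y - 2$ appearing in the denominator of $L_1(y)$ via the Euler factor $(1 - 1728/j)^{-1/2}$. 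Multiplying through by $(y(y+4)^3)^{n_k}$ turns both sides into polynomials in $y$ of the correct degree (after clearing $y^2 - 10y - 2$ in the $b_k = 1$ case).

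The remaining, crucial step is to apply a cubic rational hypergeometric transformation for ${}_{2} F_{1}(-1/12, 1/4; 2/3; \cdot)$ associated to the degree-$4$ covering $y \mapsto j(y)$ from $X_0(3)$ to $X_0(1)$, in the spirit of the identity \cite[Eq.~(28)]{VIDUNAS} used in Lemma \ref{lem25}. Such a transformation should rewrite ${}_{2} F_{1}(-1/12, 1/4; 2/3; \tfrac{y(y+4)^3}{4(2y-1)^3})$ as an explicit algebraic function in $y$ of the form $(2 + y^3)^{1/3}$ times elementary factors. Once this closed form is truncated modulo $p$, the vanishing pattern of the relevant Pochhammer symbols (reminiscent of those in Lemmas \ref{lem23} and \ref{lem24}) combined with Fermat's little theorem collapses the expression to $y^{(p+1)/3} + 2^{1/3}$, where $2^{1/3}$ denotes the unique cube root of $2$ in $\mathbb{F}_p$ (well-defined since $p \equiv 2 \pmod{3}$). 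The scalar $12^{(p+1)/4}$ (respectively $12^{(p-5)/4}$) emerges from consolidating all the algebraic prefactors (powers of $2$, $3$, $-1$, and $(2y-1)$).

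The main obstacle is identifying and verifying the precise cubic hypergeometric transformation adapted to the covering $j(y)$, and tracking all the constants through the reduction modulo $p$. The appearance of the non-standard exponent $(p+1)/3$ and of a cube root in the final answer is the clear fingerprint of an underlying closed form built from $(2 + y^3)^{1/3}$: its series has coefficients involving the Pochhammer symbol $(1/3)_n$, whose vanishing modulo $p$ is exactly what produces the clean Fermat-like polynomial $y^{(p+1)/3} + 2^{1/3}$. The remaining work is bookkeeping rather than conceptual.
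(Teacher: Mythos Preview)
Your overall strategy matches the paper's: reduce to $V^{b_k}_{n_k}$ via Theorem~\ref{HexP}, use Lemmas~\ref{lem23}/\ref{lem24} to push the hypergeometric approximation past the vanishing gap, substitute the Hauptmodul, and invoke a Vidunas-type algebraic identity. However, your guess about the closed form is wrong, and with it your explanation of where the two terms $y^{(p+1)/3}$ and $2^{1/3}$ come from.

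The relevant identity (\cite[Eq.~(2.1)]{VIDUNAS}) is
\[
{}_2F_1\!\left(-\tfrac{1}{12},\tfrac14;\tfrac23;\ \tfrac{y(y+4)^3}{4(2y-1)^3}\right) \;=\; (1-2y)^{-1/4},
\]
not anything of the shape $(2+y^3)^{1/3}$. After multiplying by $(y(y+4)^3)^{n_k}$ and absorbing $j^{n_k}=\bigl(3^34^4(2y-1)^3/(y(y+4)^3)\bigr)^{n_k}$, the main hypergeometric contribution becomes $(-3^34^4)^{(p+1)/12}(1-2y)^{p/4}$, which is congruent to a \emph{constant} modulo $p$ up to $O(y^p)$. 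That constant is what produces the $2^{1/3}$ term. The monomial $y^{(p+1)/3}$ does \emph{not} arise from truncating a cube-root series; it comes from the first hypergeometric coefficient \emph{after} the vanishing window of Lemma~\ref{lem23}, namely the term at $m=4n_k=(p+1)/3$, whose value one computes explicitly as $c\equiv -18\bmod p$. The paper therefore needs the sharpened approximation
\[
P[\mathscr{C}_k\theta_H](j)\ \equiv\ j^{n_k}\,{}_2F_1\!\left(-\tfrac{1}{12},\tfrac14;\tfrac23;\tfrac{1728}{j}\right)\;-\;\frac{c}{j^{3n_k}}\;+\;O\!\left(1/j^{3n_k+1}\right)\pmod p,
\]
with that single correction term kept explicitly, not merely ``up to sufficiently small order in $1/j$''. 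Under the substitution, $-c/j^{3n_k}$ contributes precisely the $y^{(p+1)/3}$ monomial. The case $b_k=1$ is parallel: the Euler factor $(1-1728/j)^{-1/2}$ supplies $(y^2-10y-2)^{-1}$, and the first post-gap coefficient (now at $m=4n_k+2=(p+1)/3$) again equals $-18$ modulo $p$. So the ``bookkeeping'' you anticipate is not the issue; the conceptual point you are missing is that the two-term answer is assembled from two genuinely different sources, the collapsed algebraic closed form and the leading tail coefficient, rather than from truncating a single series.
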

	\begin{proof}
	The existence and uniqueness of $2^{1/3} \in \mathbb{F}_p$ is guaranteed by $p \equiv 5, 11 \mod 12$.
	
		\emph{Case $k \equiv 0 \mod 12$.} Using Lemma \ref{lem21} we obtain
		\[ P[\mathscr{C}_k \theta_H](j) \equiv j^{n_k} {}_2 F_{1} \left(-\frac{1}{12},\frac{1}{4};\frac{2}{3};\frac{1728}{j}\right) - \frac{c}{j^{3 n_k}} + \mathcal{O}(1/j^{3 n_k + 1}) \mod p,\] which is a stronger version of Theorem \ref{HexP}. Here $c$ is the quantity
		\begin{align*}
			c \equiv \frac{(-\frac{1}{12})_m (\frac{1}{4})_m}{(\frac{2}{3})_m \, m!} 1728^m\,  \Big |_{m = (p+1)/3} \equiv - 18 \mod p.
		\end{align*}
		Applying the hypergeometric transformation
		\[  {}_2 F_{1} \left(-\frac{1}{12},\frac{1}{4};\frac{2}{3};\frac{y(y+4)^3}{4 (2y - 1)^3}\right) = (1 -2y)^{-1/4},\]
		see \cite[Eq. (2.1)]{VIDUNAS},
		we find out that
		\begin{align*}
			(y (y+4)^3)^{n_k}P[\mathscr{C}_k \theta_H]\left(\frac{3^3 4^4 (2y-1)^3}{y(y+4)^3}\right) &\equiv (-3^3 4^4)^{(p+1)/12} (1-2y)^{(p+1)/4}(1-2y)^{-1/4}\\ & \qquad- c \, (-3^{-3})^{(p+1)/4} y^{(p+1)/3} + \mathcal{O}(y^{(p+1)/3 + 1}) \mod p\\
			&\equiv 12^{(p+1)/4} y^{(p+1)/3} + (-3^3 4^4)^{(p+1)/12}\\ &\equiv 12^{(p+1)/4} \bigl(y^{(p+1)/3} + (-4)^{(p+1)/12}\bigr)
		\end{align*} as a power series in $\mathbb{F}_p[[y]]$.
	Here the $\mathcal{O}$-term is dropped since the left-hand side is a polynomial of degree (at most) $(p+1)/3$ in $y$. It remains to notice that $(-4)^{(p+1)/12} \equiv 2^{1/3} \mod p$.		
		
		\emph{Case $k \equiv 6 \mod 12$.} Lemma \ref{lem21} implies
		\begin{equation} 
			\label{Phex}
			P[\mathscr{C}_k \theta_H](j) \equiv j^{n_k} {}_2 F_{1} \left(\frac{5}{12},\frac{3}{4};\frac{2}{3};\frac{1728}{j}\right) - \frac{c'}{j^{3 n_k + 2}} + \mathcal{O}(1/j^{3 n_k + 3}) \mod p,\end{equation}
		which is again a stronger version of Theorem \ref{HexP}. Here $c'$ is the quantity \[c' \equiv \frac{(\frac{5}{12})_m (\frac{3}{4})_m}{(\frac{2}{3})_m \, m!} 1728^m\,  \Big|_{m = (p+1)/3} \equiv - 18 \mod p.\]
		From the transformation \eqref{Euler} we learn that
		\begin{align}
			{}_{2} F_{1}\left(\frac{5}{12},\frac{3}{4};\frac{2}{3};\frac{y(y+4)^3}{4 (2y-1)^3}\right) &= 
			 \left(1 - \frac{y(y+4)^3}{4 (2y -1)^3} \right)^{-1/2} \cdot {}_{2} F_{1}\left(-\frac{1}{12},\frac{1}{4};\frac{2}{3};\frac{y(y+4)^3}{4 (2y-1)^3}\right)\nonumber\\
			 &=-\frac{2 (1-2y)^{3/2}}{y^2-10y-2} \cdot (1-2y)^{-1/4} = -\frac{2 (1-2y)^{5/4}}{y^2-10y-2}. \label{Phex2}
		\end{align}
	
As a consequence of \eqref{Phex} and \eqref{Phex2} we find out that
		\begin{align*}
			& (y^2 - 10y- 2) (y (y+4)^3)^{n_k}P[\mathscr{C}_k \theta_H]\left(\frac{3^3 4^4 (2y-1)^3}{y(y+4)^3}\right) \\&\qquad \equiv -2 (-3^3 4^4)^{(p-5)/12} (1 - 2y)^{p/4}+\frac{c'}{8}(-3^{-3})^{(p+3)/4}y^{(p+1)/3}+ \mathcal{O}(y^{(p+1)/3+1}) \mod p\\
			&\qquad\equiv 12^{(p-5)/4} y^{(p+1)/3} - 2(-3^3 4^4)^{(p-5)/12} \\&\qquad\equiv 12^{(p-5)/4}\bigl(y^{(p+1)/3} -2 (-4)^{(p-5)/12}\bigr). 
		\end{align*}
It remains to notice that $-2(-4)^{(p-5)/12} \equiv 2^{1/3} \mod p$.
	\end{proof}
	\begin{proof}[Proof of Theorem \ref{HexF}] 
		Since $(p+1)/3$ is a divisor of $p^2-1$, the polynomial $y^{(p+1)/3} + 2^{1/3}$ divides $y^{p^2-1} - 1$. The latter polynomial splits over $\mathbb{F}_{p^2}$.
		More precisely, the zero set of $P[\mathscr{C}_k \theta_H](j) \mod p$ is
		\begin{equation}
			\label{hexazeros}
			\left\{ \frac{3^3 4^4 (2 a -1)^3}{a(a+4)^3} \, \Bigm | \,  a^{(p+1)/3} + 2^{1/3} = 0, \, a \in \mathbb{F}_{p^2}  \right\}  \setminus \{0, 1728\}.\end{equation}
We next show that $P[\mathscr{C}_k \theta_H](j) \mod p$ has at most one zero in $\mathbb{F}_p$. Suppose $\beta = \frac{3^3 4^4 (2a -1)^3}{a(a+4)^3} \in \mathbb{F}_{p^2}$ is a zero of $P[\mathscr{C}_k \theta_H](j)$.  From the relation $a^p = -2/a$ we see that
\begin{align*}
	\beta^p = \frac{3^3 4^4 (2 a^p - 1)^3}{a^p (a^p + 4)^3} &= \frac{3^3 4^4 (\frac{-4}{a} - 1)^3}{\frac{-2}{a} (\frac{-2}{a} + 4)^3} = \frac{1728^2}{\beta}.
\end{align*}
Since $\beta \in \mathbb{F}_p$ if and only if $\beta^p = \beta$, it is clear that $\beta = -1728$. Furthermore, by comparing the degree of $P[\mathscr{C}_k \theta_H](j)$ with the cardinality of \eqref{hexazeros}, we find that $\beta = -1728$ is a simple zero of  $P[\mathscr{C}_k \theta_H](j) \mod p$.  Therefore, $P[\mathscr{C}_k \theta_H](j)$ factors as a product of quadratic factors if $n_k$ is even and as $(j + 1728)$ times quadratic factors if $n_k$ is odd. \qedhere
	
	\end{proof}

\paragraph{Acknowledgements}

This paper grew out of the author's master thesis written at Utrecht University and was further developed at the Radboud University Nijmegen. I thank both institutions for wonderful working conditions.
The author would like to thank Gunther Cornelissen, Mar Curcó Iranzo, David Hokken and Wadim Zudilin for their valuable comments.


	\bibliographystyle{amsplain}
	\bibliography{Berendsbronnen2}
	
\end{document}